\numberwithin{equation}{section}
\DeclareMathOperator{\Ai}{Ai}
\DeclareMathOperator{\J}{J}
\newcommand{\Ja}{\J_\alpha}
\newcommand{\goto}{\rightarrow}
\newcommand{\bigo}{{\mathcal O}}
\newcommand{\half}{\frac{1}{2}}
\DeclareMathOperator{\diag}{diag}
\newcommand{\D}{\mathrm{d}}
\newcommand{\I}{\mathrm{i}}
\newcommand{\E}{\mathrm{e}}
\newcommand{\La}[1]{L_{#1}^{(\alpha)}}
\newtheorem{Theorem}{Theorem}[section]
\newtheorem{Lemma}[Theorem]{Lemma}
\newtheorem{Proposition}[Theorem]{Proposition}
\newtheorem{Condition}[Theorem]{Condition}
 { \theoremstyle{definition}
\newtheorem{Remark}[Theorem]{Remark} }
\begin{document}


\renewcommand{\thefootnote}{$\star$}

\newcommand{\arXivNumber}{1605.06438}

\renewcommand{\PaperNumber}{109}

\FirstPageHeading

\ShortArticleName{Smoothed Analysis for the Conjugate Gradient Algorithm}

\ArticleName{Smoothed Analysis\\ for the Conjugate Gradient Algorithm\footnote{This paper is a~contribution to the Special Issue on Asymptotics and Universality in Random Matrices, Random Growth Processes, Integrable Systems and Statistical Physics in honor of Percy Deift and Craig Tracy. The full collection is available at \href{http://www.emis.de/journals/SIGMA/Deift-Tracy.html}{http://www.emis.de/journals/SIGMA/Deift-Tracy.html}}}

\Author{Govind MENON~$^\dag$ and Thomas TROGDON~$^\ddag$}

\AuthorNameForHeading{G.~Menon and T.~Trogdon}

\Address{$^\dag$~Division of Applied Mathematics, Brown University,\\
\hphantom{$^\dag$}~182 George St., Providence, RI 02912, USA}
\EmailD{\href{mailto:govind_menon@brown.edu}{govind\_menon@brown.edu}}

\Address{$^\ddag$~Department of Mathematics, University of California,\\
\hphantom{$^\ddag$}~Irvine, Rowland Hall, Irvine, CA, 92697-3875, USA}
\EmailD{\href{mailto:ttrogdon@math.uci.edu}{ttrogdon@math.uci.edu}}

\ArticleDates{Received May 23, 2016, in f\/inal form October 31, 2016; Published online November 06, 2016}

\Abstract{The purpose of this paper is to establish bounds on the rate of convergence of the conjugate gradient algorithm when the underlying matrix is a random positive def\/inite perturbation of a deterministic positive def\/inite matrix. We estimate all f\/inite moments of a natural halting time when the random perturbation is drawn from the Laguerre unitary ensemble in a critical scaling regime explored in Deift et~al.~(2016). These estimates are used to analyze the expected iteration count in the framework of smoothed analysis, introduced by Spielman and Teng (2001). The rigorous results are compared with numerical calculations in several cases of interest.}

\Keywords{conjugate gradient algorithm; Wishart ensemble; Laguerre unitary ensemble; smoothed analysis}

\Classification{60B20; 65C50; 35Q15}

\medskip

\rightline{\it In honor of Percy Deift and Craig Tracy on their 70th birthdays}

\renewcommand{\thefootnote}{\arabic{footnote}}
\setcounter{footnote}{0}

\section{Introduction}
It is conventional in numerical analysis to study the worst-case behavior of algorithms, though it is often the case that worst-case behavior is far from typical. A fundamental example of this nature is the behavior of LU factorization with partial pivoting. While the worst-case behavior of the growth factor in LU factorization with partial pivoting is exponential, the algorithm works much better in practice on `typical' problems. The notion of {\em smoothed analysis} was introduced by Spielman and his co-workers to distinguish between the typical-case and worst-case performance for numerical algorithms in such situations (see~\cite{Sankar2006, Spielman2001}). In recent work, also motivated by the distinction between typical and worst case behavior, the authors (along with P.~Deift, S.~Olver and C.~Pfrang) investigated the behavior of several numerical algorithms with random input~\cite{Deift2014, DiagonalRMT} (see also~\cite{Sagun2015}). These papers dif\/fer from smoothed analysis in the sense that `typical performance' was investigated by viewing the algorithms as dynamical systems acting on random input. The main observation in our numerical experiments was an empirical universality of f\/luctuations for halting times. Rigorous results on universality have now been established in two cases~-- the conjugate gradient algorithm and certain eigenvalue algorithms~\cite{Deift2016, Deift2015}. The work of~\cite{Deift2016} presents a true universality theorem. The work~\cite{Deift2015} revealed the unexpected emergence of Tracy--Widom f\/luctuations around the {\em smallest} eigenvalue of LUE matrices in a regime where universality emerges. Thus, the analysis of a question in probabilistic numerical analysis led to a new discovery in random matrix theory.

Our purpose in this paper is to explore the connections between our work and smoothed analysis. We show that the results of~\cite{Deift2015} extend to a smoothed analysis of the conjugate gradient algorithm over \emph{strictly} positive def\/inite random perturbations (which constitute a natural class of perturbations for the conjugate gradient algorithm). To the best of our knowledge, this is the f\/irst instance of smoothed analysis for the conjugate gradient algorithm. More precisely, the results of~\cite{Deift2015} are used to establish rigorous bounds on the expected value of a halting time (Theorem~\ref{t:smooth} below). These bounds are combined with numerical experiments that show an interesting improvement of the conjugate gradient algorithm when it is subjected to random perturbations. In what follows, we brief\/ly review the conjugate gradient algorithm, smoothed analysis and the Laguerre unitary ensemble, before stating the main theorem, and illustrating it with numerical experiments.

\subsection{The conjugate gradient algorithm}
The conjugate gradient algorithm is a Krylov subspace method to solve the linear system $Ax=b$ when $A >0$ is a positive def\/inite matrix. In this article, we focus on Hermitian positive def\/inite matrices acting on $\mathbb{C}^N$, though the ideas extend to real, symmetric positive def\/inite matrices.
We use the $\ell^2$ inner product on $\mathbb{C}^N$, $ \langle u,v \rangle_{\ell^2} = \sum\limits_{i=1}^N u_i \bar{v}_i$, and $A >0$ means that $A$ is Hermitian and $ \langle u, Au \rangle_{\ell^2} >0$ for all $u \neq 0$. When $A>0$, its inverse $A^{-1}>0$, and we may def\/ine the norms
\begin{gather*}
\|u\|^2_{w} = \langle u, Au \rangle_{\ell^2} \qquad \mathrm{and} \qquad\|u\|^2_{w^{-1}} = \big\langle u, A^{-1}u \big\rangle_{\ell^2}.
\end{gather*}
In this setting, the simplest formulation of the conjugate gradient algorithm is as follows~\cite{Greenbaum1989, Hestenes1952}. In order to solve $Ax=b$, we def\/ine the increasing sequence of Krylov subspaces
\begin{gather*}
\mathcal K_k = \operatorname{span}\big\{b,Ab,\ldots,A^{k-1}b\big\},
\end{gather*}
and choose the iterates $\{x_k\}_{k=1}^\infty$, { $x_0 = 0$}, to minimize the residual $r_k = b-Ax_k$ in the $w^{-1}$ norm:
\begin{gather*}
x_k = \mathop{\operatorname{argmin}}\limits_{x \in \mathcal K_k}	\|Ax-b\|_{w^{-1}}, \qquad \|Ax_k-b\|_{w^{-1}} = \min_{x \in \mathcal K_k} \|Ax-b\|_{w^{-1}}.
\end{gather*}
Since $A>0$, $x \in \mathcal{K}_k$ for some $k \leq N$ (in our random setting it follows that $\mathcal K_N = \mathbb{C}^N$ with probability~1), so that the method takes at most $N$ steps in exact artithmetic\footnote{In calculations with f\/inite-precision arithmetic the number of steps can be much larger than $N$ and this will be taken into account in the numerical experiments in Section~\ref{s:numerics}. The results presented here can be extended to the f\/inite-precision case but only in the limit as the precision tends to $\infty$ using \cite{Greenbaum1989}. Tightening these estimates remains an important open problem.}. However, the residual decays exponentially fast, and a useful approximation is obtained in much fewer than $N$ steps. Let $\lambda_{\max}$ and $\lambda_{\min}$ denote the largest and smallest eigenvalues of $A$ and $\kappa = \lambda_{\max}/\lambda_{\min}$ the condition number. Then the rate of convergence in the $\ell^2$ and $w^{-1}$ norms is~\cite[Theorem~10.2.6]{Golub2013}
\begin{gather}\label{rut}
\|r_k\|_{w^{-1}} \leq 2\left( \frac{\sqrt{\kappa}-1}{\sqrt{\kappa}+1}\right)^{k} \|r_0\|_{w^{-1}}.
\end{gather}
Since $A$ is positive def\/inite, we have
\begin{gather*}
\lambda_{\max}^{-1} \|x\|_{\ell^2}^2 \leq \big\langle x, A^{-1} x \big\rangle_{\ell^2} \leq \lambda_{\min}^{-1} \|x\|_{\ell^2}^2.
\end{gather*}
Applying this estimate to (\ref{rut}) we f\/ind the rate of convergence of the residual in the $\ell^2$ norm
\begin{gather}\label{rutA}
\|r_k\|_{\ell^2} \leq 2\sqrt{\kappa}\left( \frac{\sqrt{\kappa}-1}{\sqrt{\kappa}+1}\right)^{k} \|r_0\|_{\ell^2}.
\end{gather}

These rates of convergence provide upper bounds on the following $\epsilon$-dependent run times, which we call \emph{halting times}:
\begin{gather}
\tau_\epsilon(A,b) = \min\left\{k\colon \frac{\|r_{k+1}\|_{\ell^2}}{\|r_0\|_{\ell^2}} \leq \epsilon\right\},\nonumber \\
\tau_{w,\epsilon}(A,b) = \min\left\{k\colon \frac{\|r_{k+1}\|_{w^{-1}}}{\|r_0\|_{w^{-1}}} \leq \epsilon \right\}.\label{haltA}
\end{gather}
Note that we have set $x_0=0$ so that $r_0 = b$ (the estimates above hold for arbitrary $x_0$). In what follows, we will also assume that $\|b\|_{\ell^2} = 1$, so that the def\/initions above simplify further.

\subsection{Smoothed analysis}
Our main results are a theorem (Theorem~\ref{t:smooth}) along with numerical evidence to demonstrate that the above worst-case estimates, can be used to obtain bounds on average-case behavior in the sense of smoothed analysis. In order to state the main result, we f\/irst review two basic examples of smoothed analysis~\cite{Spielman2001}, since these examples clarify the context of our work.

Roughly speaking, the smoothed analysis of a deterministic algorithm proceeds as follows. Given a deterministic problem, we perturb it randomly, compute the expectation of the run-time for the randomly perturbed problem and then take the maximum over all deterministic problems within a f\/ixed class. Subjecting a deterministic problem to random perturbations provides a~realistic model of `typical performance', and by taking the maximum over all deterministic problems within a natural class, we retain an important aspect of worst-case analysis. A~para\-me\-ter~$\sigma^2$ (the variance in our examples) controls the magnitude of the random perturbation. The f\/inal estimate of averaged run-time should depend explicitly on~$\sigma^2$ in way that demonstrates that the average run-time is much better than the worst-case. Let us illustrate this idea with examples.

\subsubsection{Smoothed analysis: The simplex algorithm}
Assume $\bar A = (\bar a_1, \bar a_2, \ldots, \bar a_d)$ is a deterministic matrix of size $N\times d$, and $\bar y$ and $z$ are deterministic vectors of size $N$ and $d$, respectively. Let $T(\bar A,\bar y,z)$ be the number of simplex steps required to solve the linear program
\begin{gather*}
\text{maximize} \ \ z^T x,\quad \text{subject to} \quad \bar Ax \leq \bar y,
\end{gather*}
with the two-phase shadow-vertex simplex algorithm.

We subject the data $\bar A$ and $\bar y$ to a random perturbation $\sigma A$, and $\sigma y$, where $A$ and $y$ have iid normal entries with mean zero and standard deviation $ \max_i\|(\bar y_i,\bar a_i)\|$. It is then shown in~\cite{Spielman2004} that the expected number of simplex steps is controlled by
\begin{gather*}
\mathbb E\big[T\big(\bar A + \sigma A, \bar y + \sigma y,z\big)\big] \leq P(1/\sigma,N,d),
\end{gather*}
where $P(a,b,c)$ is a polynomial. Thus, problems of polynomial complexity occupy a region of high probability.

\subsubsection{Smoothed analysis: LU factorization without pivoting}
Let $\bar A$ be an $N \times N$ non-singular matrix and consider computing its LU factorization, $\bar A = L U$,
without partial pivoting. The \emph{growth factor} of $\bar A$, def\/ined by
\begin{gather*}
\rho(\bar A) = \frac{\|U\|_\infty}{\|\bar A\|_\infty},
\end{gather*}
may be exponentially large in the size of the matrix, as seen in the following classical example:
\begin{gather*}
\bar A = \left[\begin{matrix} 1 & 0 & 0 & 0 &1\\
-1 & 1 & 0 & 0 & 1\\
-1 & -1 & 1 & 0 & 1\\
-1 & -1 & -1 & 1 & 1\\
-1 & -1 & -1 & -1 & 1 \end{matrix}\right] =\left[ \begin{matrix} 1 & 0 & 0 & 0 &0\\
-1 & 1 & 0 & 0 & 0\\
-1 & -1 & 1 & 0 & 0\\
-1 & -1 & -1 & 1 & 0\\
-1 & -1 & -1 & -1 & 1 \end{matrix}\right] \left[\begin{matrix} 1 & 0 & 0 & 0 &1\\
0 & 1 & 0 & 0 & 2\\
0 & 0 & 1 & 0 & 4\\
0 & 0 & 0 & 1 & 8\\
0 & 0 & 0 & 0 & 16 \end{matrix}\right].
\end{gather*}
Generalizing this example to all $N$, we see that $\rho(\bar A) = 2^{N-1}/N$. This is close to the worst-case estimate of Wilkinson~\cite{Wilkinson1961}. Now consider instead $\rho(\bar A + \sigma A)$ where the random perturbation~$A$ is an $N \times N$ matrix consisting of iid standard normal random variables. One of the results of~\cite{Sankar2006} is
\begin{gather}	\label{eq:smoothed-growth}
\mathbb P( \rho(\bar A + \sigma A) > 1 + t) \leq \frac{1}{\sqrt{2\pi}} \frac{N(N+1)}{\sigma t}.
\end{gather}
Hence the probability that $\rho(\bar A + \sigma A) \geq 2^{N-1}/N$ is exponentially small! The above estimate relies on a tail bound on the condition number
\begin{gather*}
\mathbb P(\kappa(\bar A + \sigma A) > t) \leq \frac{14.1 n \big(1 + \sqrt{2 (\log t)/9 n}\big)}{t \sigma}.
\end{gather*}

The example above may also be used to demonstrate exponential growth {\em with partial pivoting\/}. However, to the best of our knowledge, there are no smoothed analysis bounds analogous to~(\ref{eq:smoothed-growth}) that include the ef\/fect of pivoting.

\subsection{The main result}
We now formulate a notion of smoothed analysis for the halting time of the conjugate gradient algorithm. In order to do so, we must choose a matrix ensemble over which to take averages. Since the conjugate gradient algorithm is restricted to positive def\/inite matrices it is natural to choose random perturbations that are also positive def\/inite. The fundamental probability measure on Hermitian positive def\/inite matrices is the Laguerre unitary ensemble (LUE), or Wishart ensemble, def\/ined as follows. Assume $N$ is a positive integer and $\alpha > - N$ is another integer. Let $X$ be an $N \times (N + \alpha)$ matrix of iid standard complex normal random variables\footnote{A standard complex normal random variable is given by $Z = X + \I Y$ where $X$ and $Y$ are independent real normal random variables with mean zero and variance $1/2$.}. The Hermitian matrix $W=XX^*$ is an LUE matrix with parameter $1+\alpha/N$.

The parameter $\alpha$ plays an important role in our work. The case $\alpha=0$ is critical in the following sense. When $-N < \alpha < 0$, the random matrix $W = XX^*$ is positive {\em semi-definite} and~$0$ is an eigenvalue of multiplicity $-\alpha$ with probability 1. In particular, the condition number of~$W$ is inf\/inite almost surely. On the other hand, when $\alpha\geq 0$, the random matrix~$W$ is almost surely strictly positive def\/inite. When $\alpha = 0$, Edelman~\cite{Edelman1988} showed that the condition number of~$W$ is heavy-tailed, and does not have a f\/inite mean (see also~\cite{Sankar2006} and the previous examples). On the other hand, if $\alpha$ grows linearly with~$N$, say $\lim\limits_{N \to \infty}\alpha/N =p >0$, the leading-order asymptotics of the smallest eigenvalue of~$W$, and thus the condition number, are described by the Marcenko--Pastur distribution with parameter~$p$. In particular, as $N \to \infty$ the smallest eigenvalue of $W(N)/(N+\alpha)$ remains strictly separated from~$0$. In recent work with P.~Deift, we explored an intermediate regime $\alpha \sim 4cN^{1/2}$, and established Tracy--Widom \cite{TracyWidom} f\/luctuations of the smallest eigenvalue and the condition number (see~\cite[Theorems~1.1 and~1.3]{Deift2015}). We further showed numerically that the nontrivial f\/luctuations of the condition number are ref\/lected in the performance of the conjugate gradient algorithm on Wishart matrices in this regime. In this article, we broaden our exploration of this intermediate regime, choosing
\begin{gather}\label{e:scaling}
\alpha = \alpha(N) = \big\lfloor \sqrt{4c} N^\gamma \big\rfloor \qquad \text{for some} \quad 0 < \gamma \leq 1/2.
\end{gather}

In order to formulate a notion of smoothed analysis for the conjugate gradient algorithm, we must subject a deterministic positive-def\/inite matrix $A$ with $\|A\| \leq 1$ to a random perturbation of the form $\sigma^2 H$, where $\|H\| = O(1)$, and then take the supremum over all~$A$ with $\|A\| \leq 1$. It turns out that the largest eigenvalue of $W = XX^*$ is approximately $\nu = 4N + 2\alpha +2$. Thus, our implementation of smoothed analysis for the conjugate gradient algorithm involves estimating
\begin{gather*}
\sup_{A \geq 0, \, \|A\| \leq 1} \mathbb E[\tau_\epsilon(A + \sigma^2 H,b)], \qquad \sup_{A \geq 0, \, \|A\| \leq 1} \mathbb E[\tau_{w,\epsilon}(A + \sigma^2 H,b)], \qquad H = W/\nu,
\end{gather*}
with explicit dependence on $\sigma$ and $\gamma$. The factor $\sigma^2$ is used here so that $\sigma$ represents the scaling of the variance of the entries of~$X$. Our main result, proved in Section~\ref{s:tail}, is the following.

\begin{Theorem}\label{t:smooth}
 Assume $\alpha$ satisfies \eqref{e:scaling} and $\epsilon > 0$. Let $H = \nu^{-1}XX^*$ where $\nu = 4 N + 2 \alpha +2$ and $X$ is an $N \times (N+\alpha)$ matrix of iid standard complex normal random variables. Then with
 \begin{gather*}
 \rho_\sigma = 2\sqrt{\left(1 + \frac{1}{\sigma^2}\right) \frac{1}{c}}, \qquad \sigma > 0,
 \end{gather*}
 we have the following estimates.
 \begin{enumerate}[$(1)$]\itemsep=0pt
\item \textbf{Halting time with the $\ell^2$ norm:}
 \begin{gather*}
 \sup_{\|A\| \leq 1,\, A \geq 0} \mathbb E\big[\tau_{\epsilon}(A + \sigma^2 H,b)^j\big] \\
 \qquad{}\leq \frac{1}{2^j} N^{j(1-\gamma)}\rho_\sigma^j \big(\log N^{1-\gamma} \rho_\sigma 2\epsilon^{-1}\big)^j(1 + o(1)), \qquad \text{as} \quad N \to \infty.
 \end{gather*}
\item \textbf{Halting time with the weighted norm:}
 \begin{gather*}
 \sup_{\|A\| \leq 1,\, A \geq 0}\mathbb E\big[\tau_{w,\epsilon}(A + \sigma^2 H,b)^j\big] \leq \frac{1}{2^j} N^{j(1-\gamma)} \rho_\sigma^j\big(\log2\epsilon^{-1}\big)^j(1 + o(1)), \qquad \text{as} \quad N \to \infty.
 \end{gather*}
 \item \textbf{Successive residuals:} For $r_k = r_k(A + \sigma^2 H,b)$, the $k$th residual in the solution of $(A + \sigma^2 H)x = b$ with the conjugate gradient algorithm
 \begin{gather*}
 \sup_{\|A\| \leq 1,\,A \geq 0}\mathbb E\left[ \frac{\|r_{k+1}\|^j}{\|r_k\|^j}\right] \leq \left( 1 - \frac{2}{\rho_\sigma N^{1-\gamma}+1}(1+o(1))\right)^j, \qquad \text{as}\quad N \to \infty,
 \end{gather*}
 where $\|\cdot\|$ is either $\|\cdot\|_{\ell^2}$ or $\|\cdot\|_{w^{-1}}$.
 \end{enumerate}
 \end{Theorem}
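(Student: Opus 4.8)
The plan is to derive all three estimates from one deterministic input — the worst-case convergence rate $(\ref{rut})$--$(\ref{rutA})$ of the conjugate gradient algorithm, which enters only through the condition number $\kappa=\kappa(A+\sigma^2 H)$ — combined with moment control of $\kappa$ over the random perturbation. The suprema then cost nothing: the right-hand sides of $(\ref{rut})$--$(\ref{rutA})$ do not depend on $b$, and the bound on $\kappa$ produced below does not depend on $A$, so one obtains a pointwise inequality of the form $\tau_{\bullet,\epsilon}(A+\sigma^2 H,b)^{j}\le g_j(X)$ with $g_j$ independent of $A$ and $b$, after which $\mathbb E[\,\cdot\,]$ and $\sup_{A\ge 0,\ \|A\|\le 1}$ may be taken in either order.

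The first step is the bound on the condition number. Since $A\ge 0$ and $\|A\|\le 1$, Weyl's inequalities give $\lambda_{\max}(A+\sigma^2 H)\le 1+\sigma^2\lambda_{\max}(H)$ and $\lambda_{\min}(A+\sigma^2 H)\ge\sigma^2\lambda_{\min}(H)$, hence
\[
\kappa(A+\sigma^2 H)\ \le\ \frac{1/\sigma^2+\lambda_{\max}(H)}{\lambda_{\min}(H)}\ =\ \frac{\nu/\sigma^2+\lambda_{\max}(XX^*)}{\lambda_{\min}(XX^*)},
\]
which is a functional of $X$ alone. The normalization $\nu=4N+2\alpha+2$ is, to leading order, the upper spectral edge $\big(\sqrt N+\sqrt{N+\alpha}\,\big)^{2}$ of $XX^*$, so $\nu^{-1}\lambda_{\max}(XX^*)=1+o(1)$ with rapidly decaying upper tail (standard Wishart edge estimates); the lower edge is $\big(\sqrt{N+\alpha}-\sqrt N\,\big)^{2}=\alpha^{2}/(4N)+\cdots=cN^{2\gamma-1}(1+o(1))$ under $(\ref{e:scaling})$. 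Thus $\kappa(A+\sigma^2 H)\le\rho_\sigma^{2}N^{2(1-\gamma)}(1+o(1))$, i.e.\ $\sqrt{\kappa(A+\sigma^2 H)}\le\rho_\sigma N^{1-\gamma}(1+o(1))$. To promote this to moments one invokes \cite{Deift2015}: the limit theorem there for the smallest eigenvalue in the scaling $(\ref{e:scaling})$ — Tracy--Widom fluctuations when $\gamma=1/2$ — together with a tail estimate that keeps $\nu/\lambda_{\min}(XX^*)$ uniformly integrable to the relevant order, gives, for each fixed $j$,
\[
\mathbb E\Big[\big(\nu^{-1}\lambda_{\min}(XX^*)\big)^{-j/2}\big(1/\sigma^2+\nu^{-1}\lambda_{\max}(XX^*)\big)^{j/2}\Big]=\rho_\sigma^{\,j}\,N^{j(1-\gamma)}(1+o(1)),
\]
and the same identity with the exponent $j$ in place of $j/2$, and with an additional factor $\big(\log(\cdots)\big)^{j}$ inserted (the logarithm concentrates, being $\sim(1-\gamma)\log N$).

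With the condition-number estimate in hand the remaining work is routine. Put $\chi=(\sqrt\kappa-1)/(\sqrt\kappa+1)$, so that $\log(1/\chi)=\log\frac{\sqrt\kappa+1}{\sqrt\kappa-1}\ge 2/\sqrt\kappa$. By $(\ref{rut})$, $\|r_{k+1}\|_{w^{-1}}/\|r_0\|_{w^{-1}}\le 2\chi^{k+1}$, which is $\le\epsilon$ as soon as $k+1\ge\log(2\epsilon^{-1})/\log(1/\chi)$; hence $\tau_{w,\epsilon}(A+\sigma^2 H,b)\le\tfrac12\sqrt\kappa\,\log(2\epsilon^{-1})$, and raising to the $j$th power and using the moment bound above yields~(2). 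For the $\ell^2$ norm one uses $(\ref{rutA})$, whose extra factor $2\sqrt\kappa$ turns the threshold into $k+1\ge\log(2\sqrt\kappa\,\epsilon^{-1})/\log(1/\chi)$ and gives $\tau_{\epsilon}\le\tfrac12\sqrt\kappa\,\log(2\sqrt\kappa\,\epsilon^{-1})$; since $x\mapsto x\log(2x\epsilon^{-1})$ is increasing, substituting the bound on $\sqrt\kappa$ and taking expectations (now with a concentrating logarithmic factor) gives~(1). For~(3) one argues analogously from the one-step counterpart of these Chebyshev estimates — e.g.\ restarting the algorithm at $x_k$ and performing a single further step, which is dominated by the full run — controlling the per-step reduction $\|r_{k+1}\|/\|r_k\|$ in either norm in terms of $\kappa$ in the form recorded in~(3); raising to the $j$th power and taking $\mathbb E$ via the bounds above completes the proof.

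The main obstacle is the material of the second step, namely the quantitative passage from the distributional statement of \cite{Deift2015} about $\lambda_{\min}(XX^*)$ to control of the negative moments $\mathbb E\big[\lambda_{\min}(XX^*)^{-p}\big]$ with the sharp leading constant and an honest $o(1)$ remainder, uniformly over the full range $0<\gamma\le 1/2$ and not just the critical exponent $\gamma=1/2$ treated there, and including the mild coupling with $\lambda_{\max}(XX^*)$ and with $\log\kappa$ demanded by parts~(1)--(3). This is the one place where a tail bound, rather than mere convergence in distribution, is essential; everything else reduces to the classical estimates $(\ref{rut})$--$(\ref{rutA})$ and elementary inequalities.
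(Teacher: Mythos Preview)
Your approach is correct and matches the paper's: reduce everything to the condition number via the deterministic CG estimates, bound $\kappa(A+\sigma^2 H)$ uniformly in $A$ by a shift of $\lambda_{\max}(H)/\lambda_{\min}(H)$, and then control the moments using the tail bounds on the extreme LUE eigenvalues from~\cite{Deift2015}. The step you flag as the main obstacle is carried out in the paper by an explicit layer-cake integration (Lemma~\ref{l:exp-est}): writing $\mathbb E[g(\kappa)]=\int_1^\infty g'(s)\,\mathbb P(\kappa>s)\,\D s$ and splitting at $b_N\sim a f(N)$, with the combined tail bound $\mathbb P(\kappa>s)\lesssim (s/b_N)^{-\alpha/2+e_N}$ of Lemma~\ref{l:cond-tail} handling the infinite piece --- exactly the ``tail bound rather than mere convergence in distribution'' you anticipate.
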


\begin{Remark}
The parameters $(\gamma,\sigma)$ control the ef\/fect of the random perturbation in very dif\/ferent ways. In Lemma~\ref{l:tail} we precisely describe how increasing $\gamma$ leads to better conditioned problems. For all $0 < \gamma \leq 1/2$ (and conjecturally for $\gamma > 1/2$) the asymptotic size of the expectation and the standard deviation of $\tau_\epsilon$ is $\bigo(N^{1-\gamma} \log N)$, meaning that the conjugate gradient algorithm will terminate before its maximum of $N$ iterations with high probability. For instance, assume $\alpha$ satisf\/ies \eqref{e:scaling} with $c = 2$. We use Markov's inequality and Theorem~\ref{t:smooth} for $j > 0$ and suf\/f\/iciently large\footnote{Here $N$ should be suf\/f\/iciently large so as to make the error term in Theorem~\ref{t:smooth} less than unity.} $N$ to obtain
\begin{gather*}
\sup_{\|A\| \leq 1,\, A \geq 0} \mathbb P \big( \tau_\epsilon\big(A + \sigma^2 H,b\big) \geq N^{1-\lambda} \big) \\
\qquad{} \leq \frac{1}{2^{j-2}}\left(1 + \frac{1}{\sigma^2} \right)^{j/2}N^{-j(\gamma-\lambda)} \big(\log \big[4N^{1-\gamma}\big(1 + \sigma^{-2}\big)^{1/2} \epsilon^{-1}\big]\big)^j.
\end{gather*}
Hence for $\lambda < \gamma$ and $j$ large, this probability decays rapidly.
\end{Remark}

\begin{Remark}
We only prove Theorem~\ref{t:smooth} for LUE perturbations in the range $0 < \gamma \leq 1/2$. However, we expect Theorem~\ref{t:smooth} to hold for all $0 \leq \gamma \leq 1$, as illustrated in the numerical experiments below. In order to establish Theorem~\ref{t:smooth} in the range~$\frac{1}{2} < \gamma \leq 1$ it is only necessary to establish Lemma~\ref{l:tail} for these values of~$\gamma$. This will be the focus of future work.
\end{Remark}

\begin{Remark}
Theorem~\ref{t:smooth} provides aymptotic control on the $j$th moments of halting times for each~$j$. This formally suggests that
one may obtain a~bound on an exponential generating function of the halting times above. However, we cannot establish this because the condition number $\kappa$ has only $O(\alpha)$ moments at any f\/inite~$N$.
\end{Remark}

\begin{Remark}
By restricting attention to positive def\/inite perturbations we ensure that the conjugate gradient scheme is always well-def\/ined for the perturbed matrix $A + \sigma^2 H$. This also allows the following simple, but crucial lower bound, on the lowest eigenvalue of the perturbed matrix
\begin{gather*}
\lambda_{\min}\big(A + \sigma^2 H \big) \geq \lambda_{\min}(A),
\end{gather*}
which then yields an upper bound on the condition number of the perturbed matrix $\kappa (A +\sigma^2 H)$. We have not considered the question of random perturbations of $A$ that are Hermitian, but not necessarily positive def\/inite. Such perturbations are more subtle since they must be scaled according to the smallest eigenvalue of~$A$. Nor have we considered the question of whether such perturbations provide good `real-life' models of a smoothed analysis of the conjugate gradient scheme. Nevertheless, the above framework shares important features with~\cite{Sankar2006} in that the problem is ``easier'' for large values of $\sigma$ and the worst case of the supremum over the set $\{A \geq 0, \; \|A\| \leq 1\}$ can be realized at singular~$A$.
\end{Remark}

\subsection{Numerical simulations and the accuracy of the estimates}\label{s:numerics}

In this section we investigate how close our estimates on $\mathbb E[\tau_{\epsilon}(A + \sigma^2 H,b)]$ are to the true value of the expectation. We present numerical evidence that in the ``$\sigma = \infty$'' (also obtained by $A = 0$ and $\sigma = 1$) case the estimates are better for larger values of $\gamma$, and continue to hold beyond the $\gamma = 1/2$ threshold of Theorem~\ref{t:smooth}. We also give examples for specif\/ic choices of~$A$ and demonstrate that, as expected, the actual behavior of the conjugate gradient algorithm is much more complicated for $A \neq 0$.

Because the conjugate gradient algorithm is notoriously af\/fected by round-of\/f error, we adopt the following approach to simulating $\tau_{\epsilon}(M,b)$, $M = A + \sigma^2 H$ with f\/inite-precision arithmetic:
\begin{itemize}\itemsep=0pt
\item In exact arithmetic, the conjugate gradient algorithm applied to $Mx = b$, $M = U^* \Lambda U$, with initial guess $x_0 = 0$, has the same residuals as the algorithm applied to $\Lambda y = U^* b$. Indeed, if $x_k$ satisf\/ies $\|Mx_k - b\|_{w^{-1}} = \min\limits_{x \in \mathcal K_k} \|Mx - b\|_{w^{-1}}$ then for $y = U^* x$, $y_k = U^* x_k$, $\|Mx_k - b\|_{w^{-1}}^2 = \langle Mx_k - b, x_k - x\rangle = \langle \Lambda y_k - U^*b, y_k - y \rangle $. Thus, def\/ining $\|\cdot\|^2_{\tilde w^{-1}} = \langle \cdot,\Lambda^{-1} \cdot\rangle$ we have
	 \begin{gather*}
	 \|\Lambda y_k - U^* b\|_{\tilde w^{-1}} = \min_{y \in \tilde {\mathcal K}_k} \|\Lambda y - U^* b\|_{\tilde w^{-1}},\\
\tilde {\mathcal K}_k = U^* \mathcal K_k = \operatorname{span}\big\{U^*b, \Lambda U^*b, \ldots, \Lambda^{k-1} U^*b \big\}.
	 \end{gather*}
	This is an exact characterization of the iterates of the conjugate gradient algorithm applied to $\Lambda y = U^*b$.
\item Sample a matrix $H = XX^*/\nu$ and compute the spectral decomposition $A + \sigma^2 H = U\Lambda U^*$. Sample a vector $b$ with iid Gaussian entries and normalize\footnote{Choosing $b$ in this way is convenient for these manipulations but it is not necessary. We choose a non-Gaussian vector for our actual experiments.} it, so that $\|b\|_{\ell^2} = 1$. 	Prior to normalization the entries of $b$ are iid Gaussian, thus $b$ is uniformly distributed on the unit sphere in $(\mathbb{C}^N,\ell^2)$. Note that if $A = 0$, $M = \sigma^2 H$ is a Wishart matrix, we f\/ind that $U^*b$ is also uniformly distributed on the unit sphere in $(\mathbb{C}^N,\ell^2)$. That is, $b$ and $U^*b$ have the same law.
\item Applying the diagonal matrix $\Lambda$ to a vector is much less prone to round-of\/f error since it involves only $N$ multiplications, as opposed to $N^2$ multiplications for the dense matrix~$H$. Thus, to minimize round-of\/f error we compute the iterates of the conjugate gradient algorithm applied to $\Lambda y=b$ with $\Lambda$ as above, and $b$ uniformly distributed on the unit sphere in $(\mathbb{C}^N,\ell^2)$. As noted above when $A = 0$, these iterates have the same law as those of $Hx =\tilde{b}$ when $\tilde{b}$ and $b$ have the same law, and when $H$ is a Wishart matrix. By computing the number of iterations necessary (in high-precision arithmetic) so that $\|y_{k+1}\|_{\ell^2} \leq \epsilon$, we obtain one sample of the halting time $\tau_{\epsilon}(M,b)$ without signif\/icant round-of\/f errors.
\end{itemize}

\subsubsection[The ``$\sigma = \infty$'' case]{The ``$\boldsymbol{\sigma = \infty}$'' case}

Now, we investigate how close our estimates on $\mathbb E[\tau_{\epsilon}(H,b)]$ (which can be obtained from Theorem~\ref{t:smooth} by formally sending $\sigma \to \infty$). In Figs.~\ref{f:cLUE}, \ref{f:nLUE} and \ref{f:sLUE} we plot the sample mean $\overline{\tau_{\epsilon}}$ over 1,000 samples as $N$ increases. Throughout our numerical experiments $b$ is taken to be iid uniform on $[-1,1]$ and then normalized to be a unit vector. With this consideration, it is clear that the estimate in Theorem~\ref{t:smooth} is good for $\gamma = 2/3$ (despite the fact that we have not proved it holds in this case), fairly tight for $\gamma = 1/2$ and not as good for $\gamma = 1/3$. These calculations demonstrate that the worst case bounds~\eqref{rutA} and~\eqref{rut} provide surprisingly good estimates in a~random setting. Further, they appear to be exact in the sense that Theorem~\ref{t:smooth} predicts the correct order of the expectation of~$\tau_{\epsilon}$ as~$N \to \infty$.

\begin{figure}[th!]\centering
\subfigure[]{\includegraphics[width=.45\linewidth]{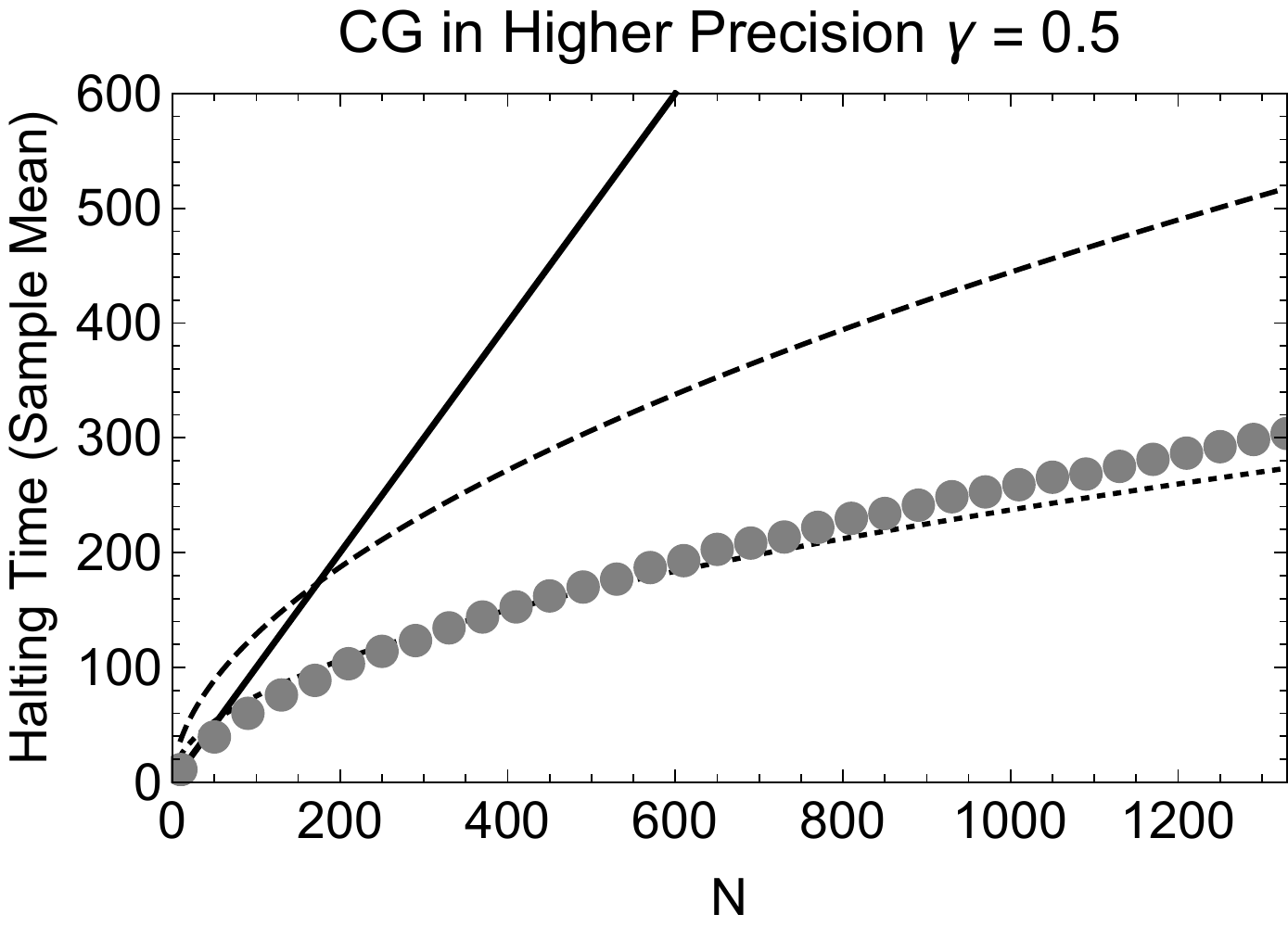}\label{f:cplt}}\hfill
\subfigure[]{\includegraphics[width=.45\linewidth]{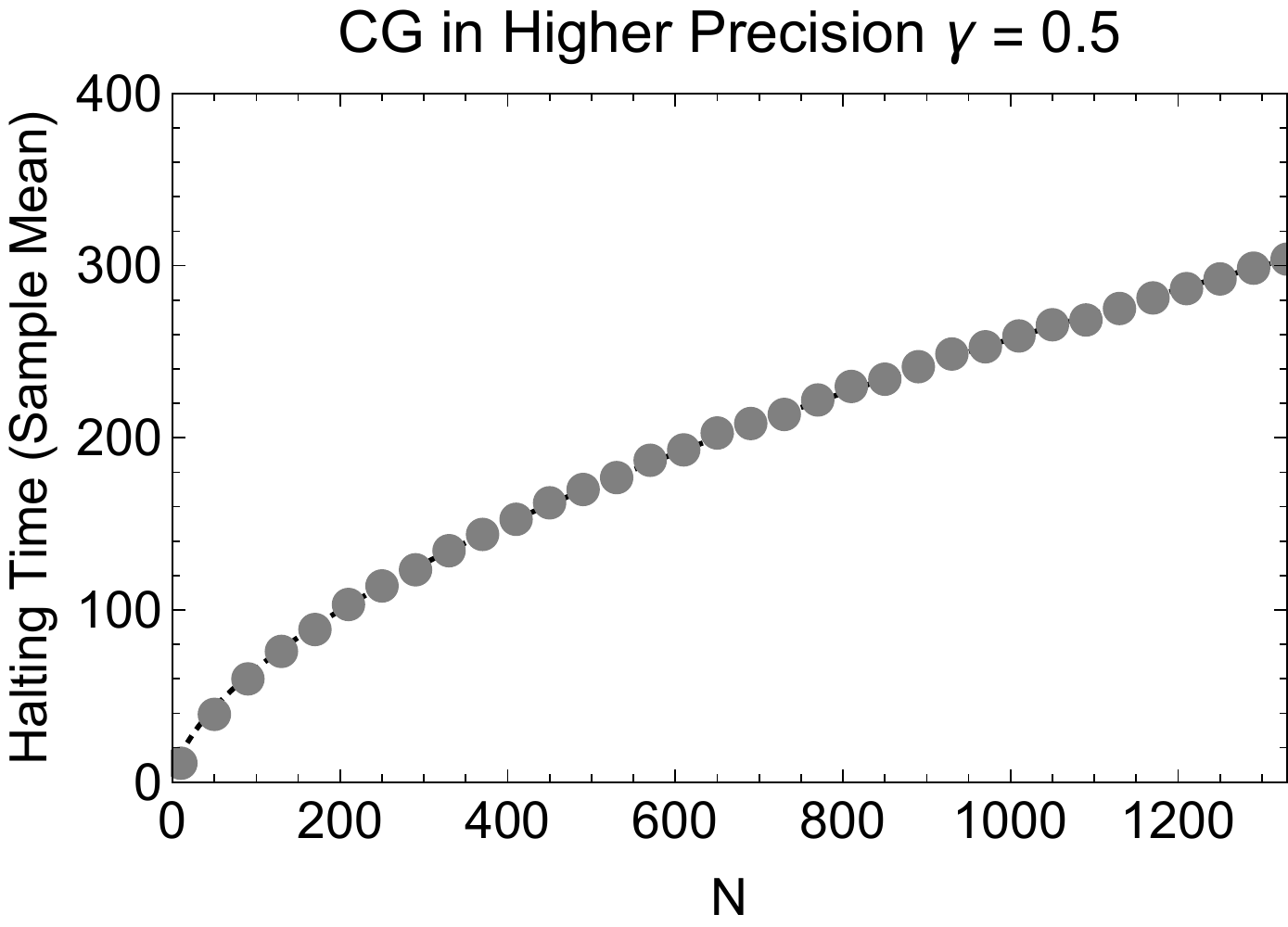}\label{f:cplt2}}
\caption{(a)~The sample mean $\overline{\tau_{\epsilon}}$ as a function of $N$ for $\gamma = 1/2$ and $c = 1$ with $\epsilon = 10^{-4}$. The plot also shows the deterministic maximum of $N$ iterations for the conjugate gradient algorithm (solid line), the upper bound computed in Theorem~\ref{t:smooth} (dashed line) and the curve $7.5 N^{1/2}$ (dotted line) to demonstrate that $\overline{\tau_{\epsilon}}$ grows faster than $N^{1/2}$. (b)~A~f\/it of the data points using the function $F(N) = a N^{1/2} \log N + b N^{1/2}$, $a \geq 0$ and $b \geq 0$ plotted against the data. Here we f\/ind $a = 0.67$ and $b = 3.51$ indicating that Theorem~\ref{t:smooth} predicts the correct scaling in $N$ for $\overline{\tau_{\epsilon}}$.}\label{f:cLUE}
\end{figure}

\begin{figure}[th!]\centering
\subfigure[]{\includegraphics[width=.45\linewidth]{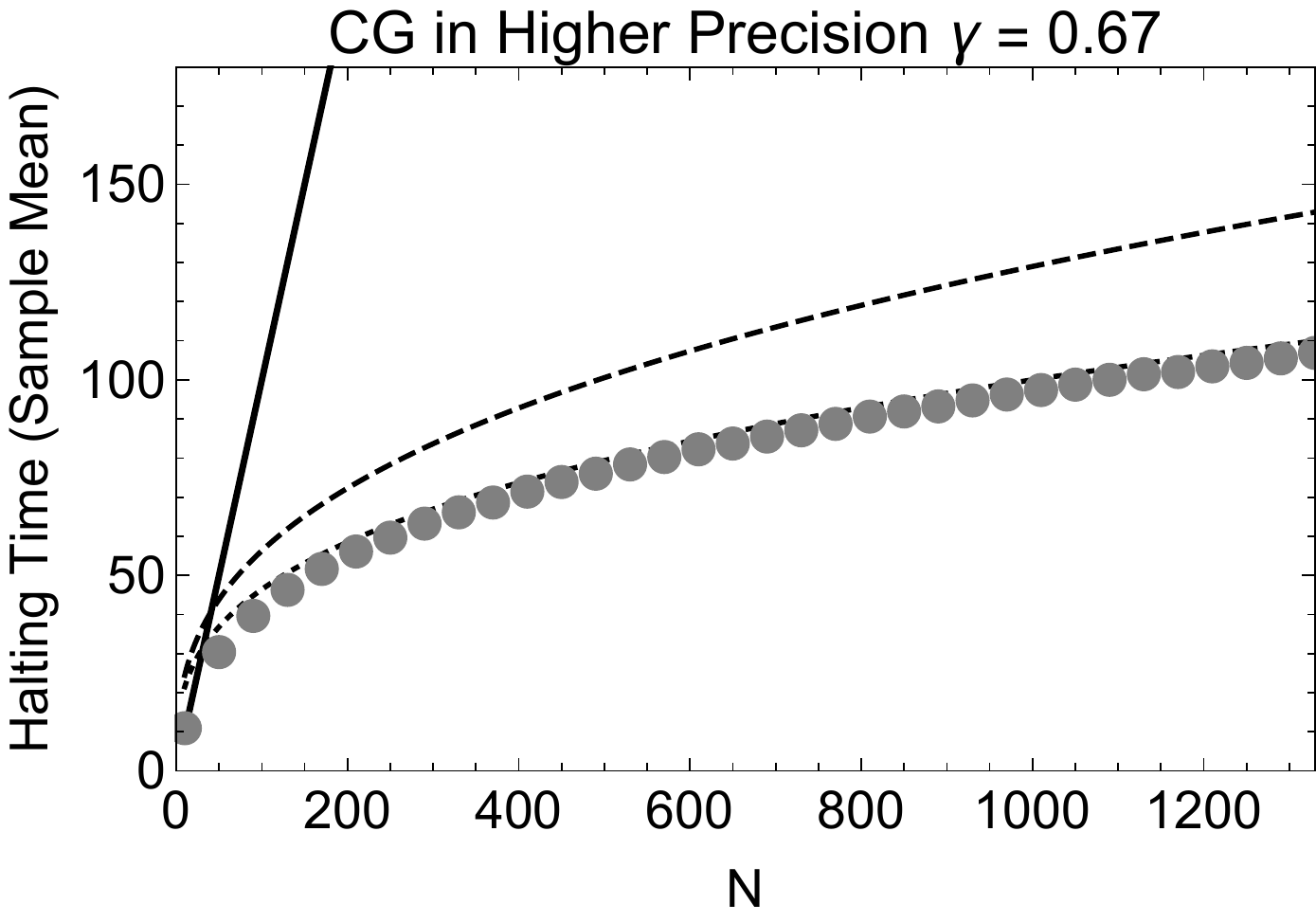}\label{f:nplt}}\hfill
\subfigure[]{\includegraphics[width=.45\linewidth]{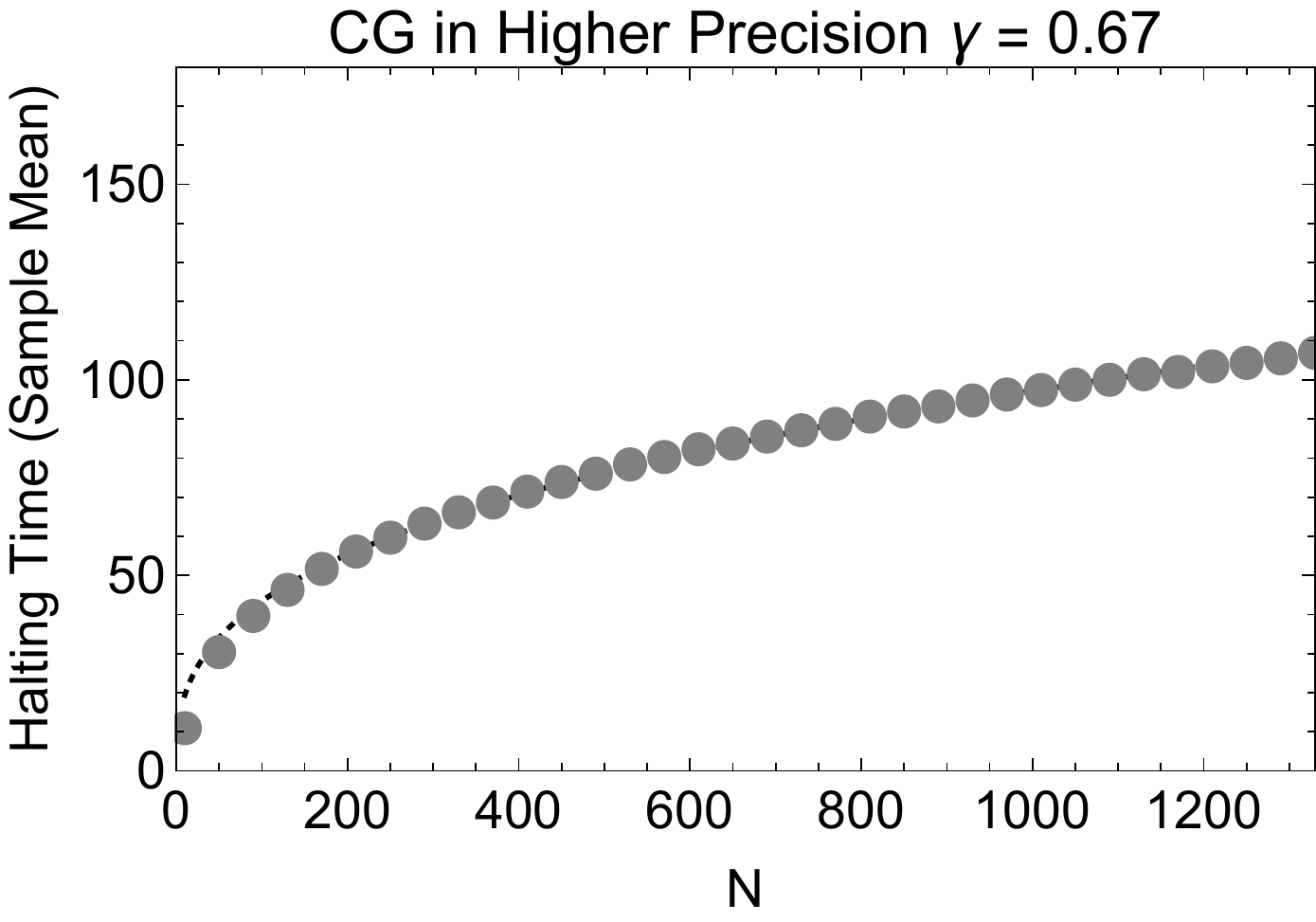}\label{f:nplt2}}
\caption{(a)~The sample mean $\overline{\tau_{\epsilon}}$ as a function of $N$ for $\gamma = 2/3$ and $c = 1$ with $\epsilon = 10^{-4}$. The plot includes the linear upper bound $\tau_\epsilon \leq N$ (solid line), the upper bound computed in Theorem~\ref{t:smooth} (dashed line) and the curve $10 N^{1/3}$ (dotted line) to demonstrate that $\overline{\tau_{\epsilon}}$ grows at approximately $N^{1/3}$. (b)~A~f\/it of the data points using the function $F(N) = a N^{1/3} \log N + b N^{1/3}$, $a \geq 0$ and $b \geq 0$ plotted against the data. Here we f\/ind $a = 0.152$ and $b = 8.66$. Again, Theorem~\ref{t:smooth} predicts the correct scaling for $\overline{\tau_{\epsilon}}$.}\label{f:nLUE}
\end{figure}

\begin{figure}[th!]\centering
\subfigure[]{\includegraphics[width=.45\linewidth]{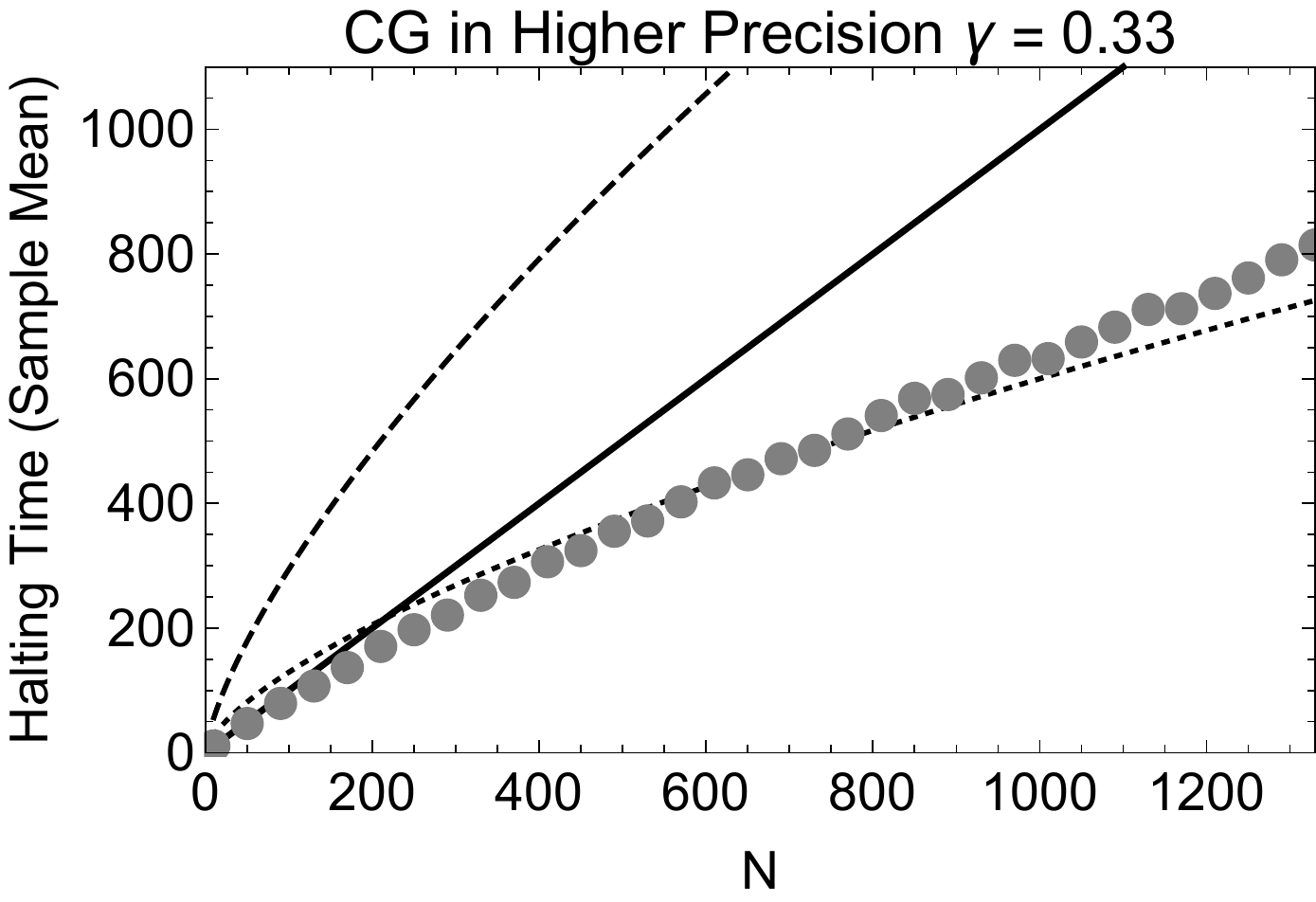}\label{f:splt}}\hfill
\subfigure[]{\includegraphics[width=.45\linewidth]{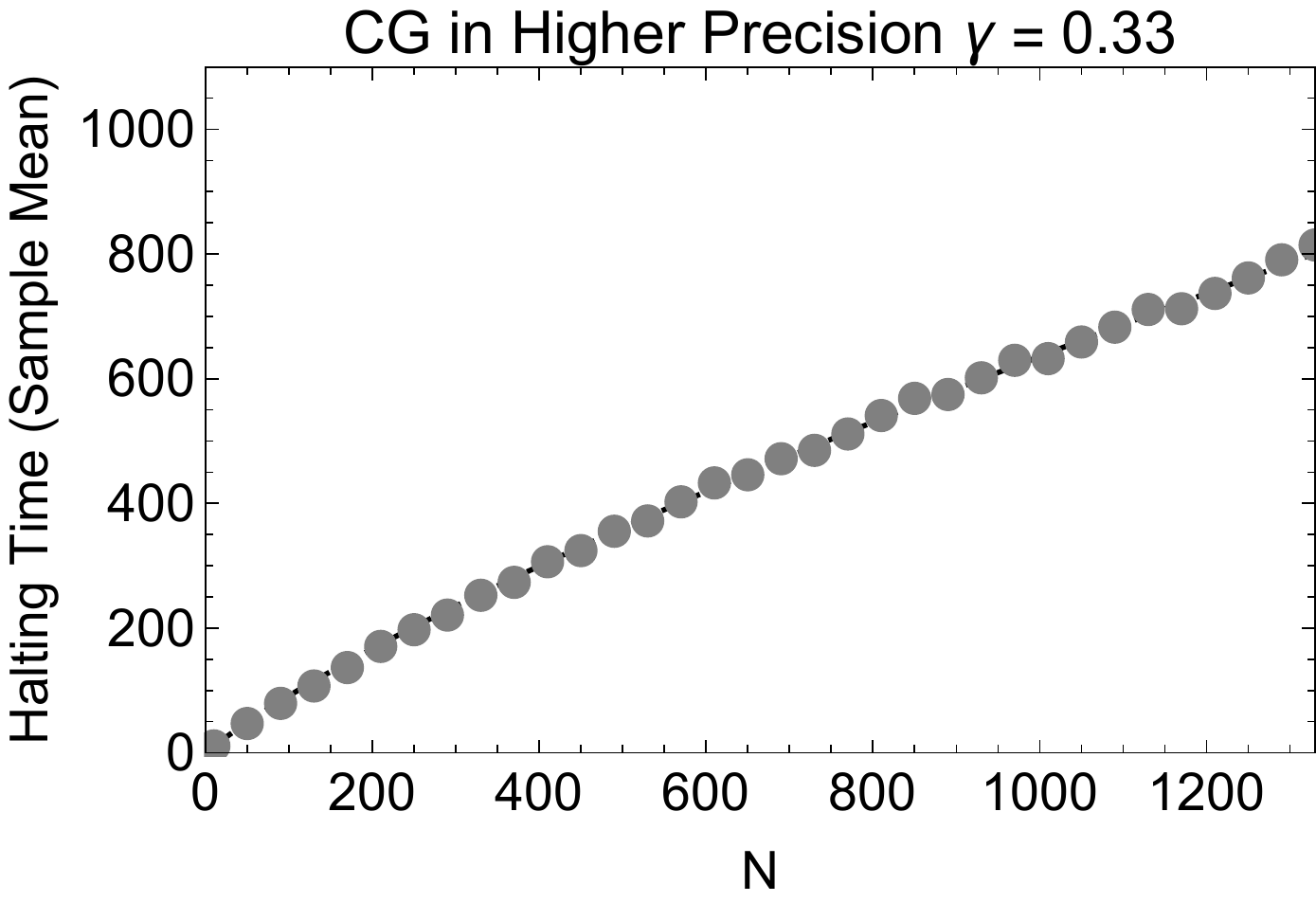}\label{f:splt2}}
\caption{(a)~The sample mean $\overline{\tau_{\epsilon}}$ as a function of $N$ for $\gamma = 1/3$ and $c = 1$ with $\epsilon = 10^{-4}$. The plot also shows the upper bound of $N$ iterations (solid line), the upper bound computed in Theorem~\ref{t:smooth} (dashed line) and the curve $6 N^{2/3}$ (dashed) line to demonstrate that $\overline{\tau_{\epsilon}}$ grows faster than $N^{2/3}$. (b)~A~f\/it of the data points using the function $F(N) = a N^{2/3} \log N + b N^{2/3}$, $a \geq 0$ and $b \geq 0$ plotted against the data. The parameters $a = 0.916$ and $b = 0$ f\/it the data very well.}\label{f:sLUE}
\end{figure}

Comparing these numerical results with Theorem~\ref{t:smooth} we conclude that:
 \begin{enumerate}[(1)]\itemsep=0pt
 \item Tail estimates on the condition number derived from tail estimates of the extreme eigenvalues, can be used to obtain near optimal, and in some cases optimal, estimates for the expected moments of the condition number.
 \item In light of rigorous results and heuristic expectations of universality in random matrix theory, we f\/ind it reasonable to expect Lemma~\ref{l:tail} and Theorem~\ref{t:smooth} to hold for more general real and complex sample covariance matrices, not just LUE.
 \item The worst-case estimates given in~\eqref{rut} and~\eqref{rutA} produce ef\/fective bounds on the moments of the halting time, and predict the correct order of growth of the mean as $N \to \infty$. The importance of this observation is that these bounds are known to be sub-optimal. Thus, our results show that the matrices for which these estimates are sub-optimal have a small probability of occurrence.
 \end{enumerate}

\subsubsection{Perturbed discrete Laplacian}
The numerical examples of the previous section are dominated by noise. In this subsection and the next, we investigate the ef\/fect of small LUE perturbations on structured matrices~$A$. This is a more subtle problem since it is hard to conjecture the growth rate of $\mathbb E[\tau_{\epsilon}(A + \sigma^2 H,b)]$ as~$\sigma$ and~$\gamma$ vary for a~given~$A$. We present numerical experiments on random perturbations of two examples that have been studied in the literature on the conjugate gradient algorithm~-- discrete Laplacians and singular matrices with clusters of eigenvalues. In both these examples, we numerically estimate the growth with $N$ of the halting time $\tau_{\epsilon}(A + \sigma^2 H,b)$ for a range of~$\sigma$ and $\gamma$. These numerical computations are compared with the unperturbed ($\sigma = 0$) and noise dominated (``$\sigma = \infty$'') cases. Broadly, we observe that f\/inite noise gives faster convergence (smaller halting time) with a dif\/ferent scaling than what is expected with no noise. We also f\/ind that when~$A \neq 0$, the halting time is not strongly af\/fected by~$\gamma$. At present, these are numerical observations, not theorems. We hope to investigate the accelerated convergence provided by noise in future work.

\begin{figure}[t!]\centering
\subfigure[]{\includegraphics[width=.45\linewidth]{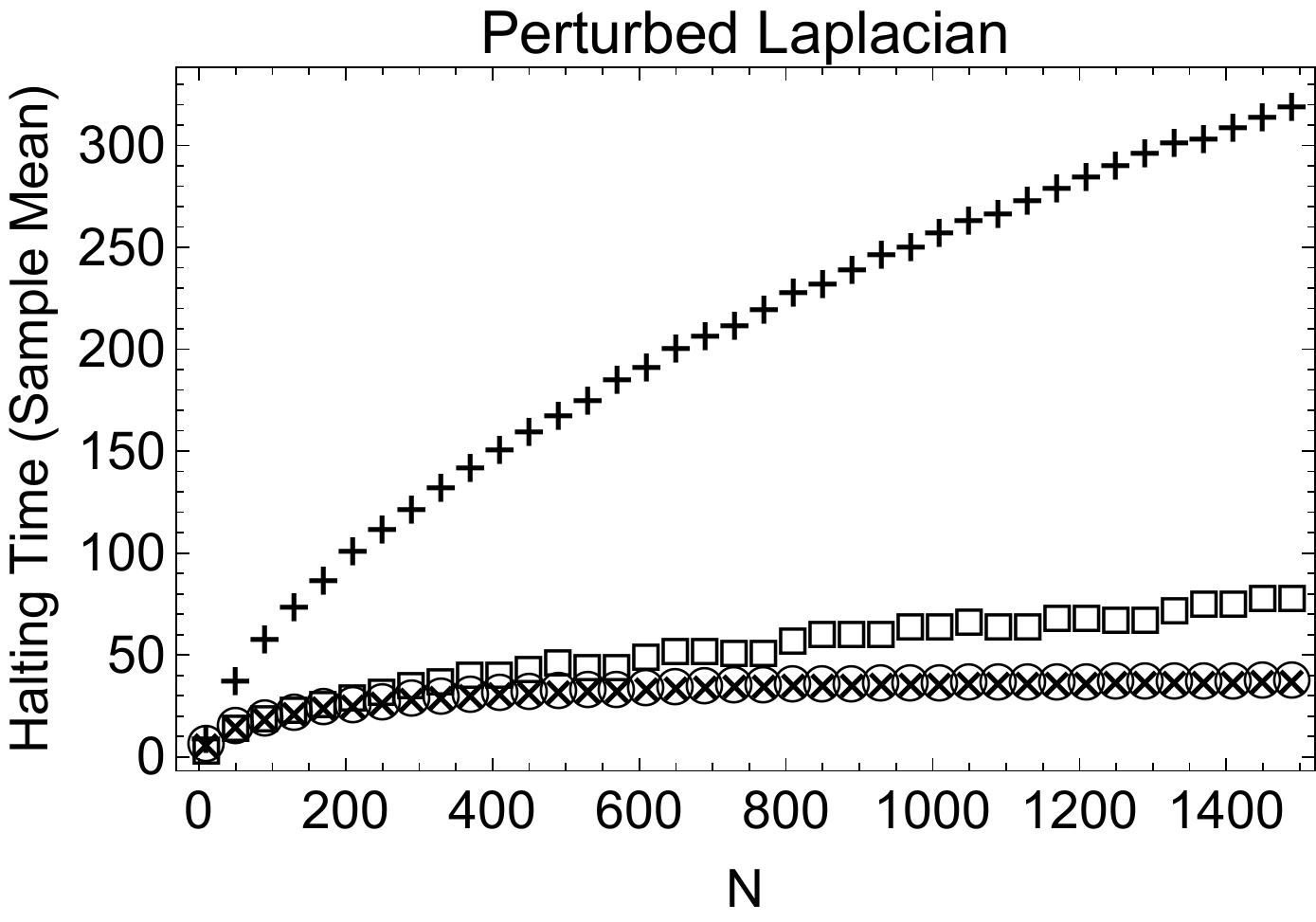}\label{f:dschr1}}\hfill
\subfigure[]{\includegraphics[width=.45\linewidth]{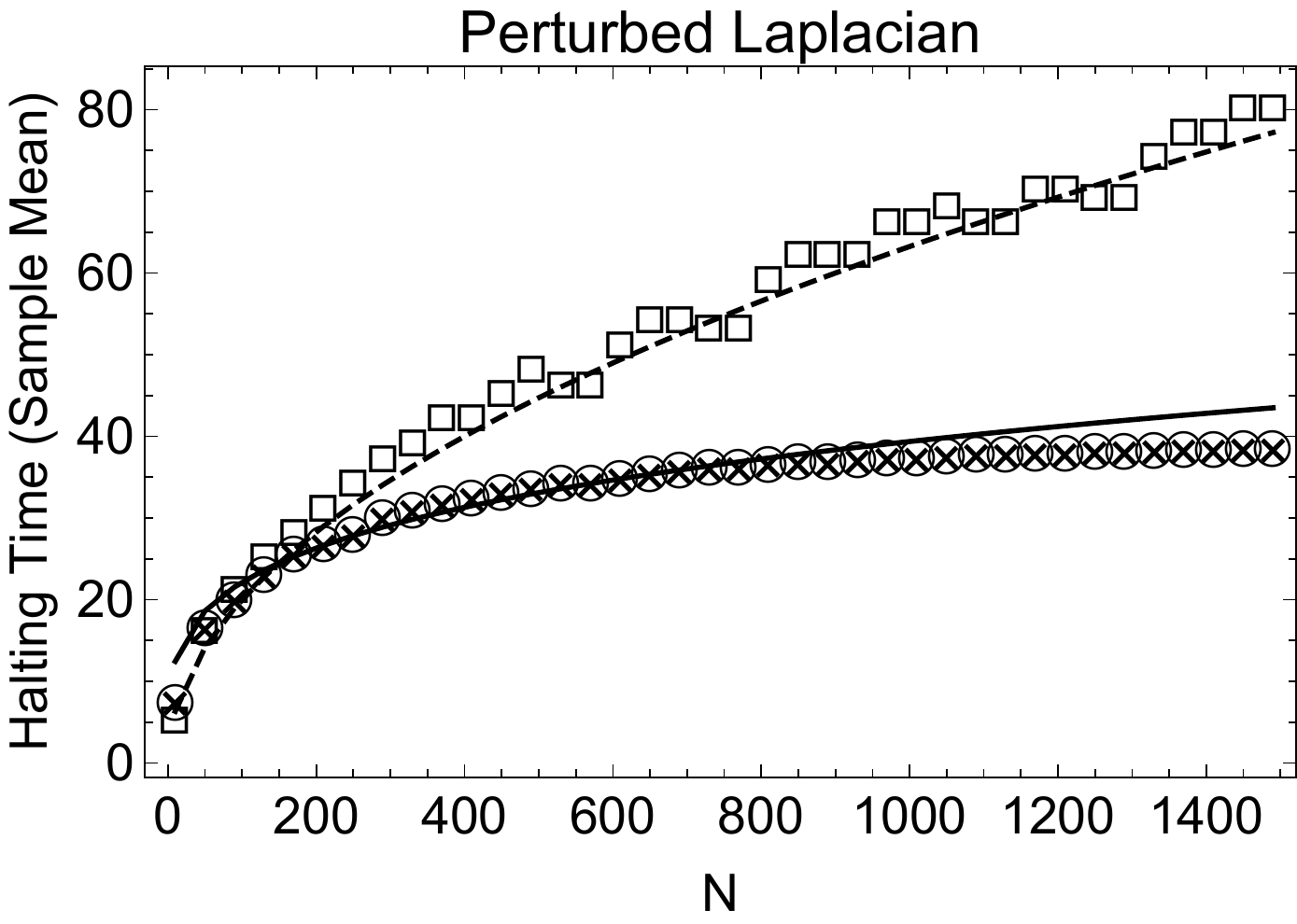}\label{f:dschr2}}
\caption{Numerical computations for sample mean of the halting time in the case of a~randomly perturbed discrete Laplacian operator $-\Delta_{m,k} + \sigma^2 H$. We let $m = k = \lfloor \sqrt{N} \rfloor$ and make the following choices of parameter values $\sigma = 0$~($\Box$), $\sigma = 0.1$, $\gamma = 1/2$~($\circ$), $\sigma = 0.1$, $\gamma = 1/3$~($\times$) and $\sigma = \infty$, $\gamma = 1/2$~($+$). (a)~The sample mean for all four parameter choices. (b)~Three parameter choices plotted along with the functions $2 \sqrt{N}$ (dashed) and $7 N^{1/4}$ (solid).}\label{f:dschr}
\end{figure}

In our f\/irst example, $A$ is the $mk \times mk$ 2D discrete Laplacian def\/ined by the Kronecker product
\begin{gather*}
\Delta_{m,k} = I_{m} \otimes D_{2,k} + D_{2,m} \otimes I_{k},
\end{gather*}
where $D_{2,m}$ is the $m\times m$ symmetric tridiagonal matrix with $-2$ on the diagonal and $1$ on the of\/f-diagonals. We choose $m = k = \lfloor \sqrt{N} \rfloor $ in the computations.

Some results of numerical experiments with this choice of $A$ are shown in Fig.~\ref{f:dschr}. The scaling of the sample mean of the halting time, $\overline{\tau_{\epsilon}}$, is $O(\sqrt{N})$ in the extreme cases when $\sigma=0$ or $\sigma=\infty$ (see $+$ and $\Box$ in Fig.~\ref{f:dschr}). However, when $\sigma$ is $O(1)$, we f\/ind that $\overline{\tau_{\epsilon}} \sim N^{1/4}$ (see $\circ$ and $\times$ in Fig.~\ref{f:dschr}). Further, this result is not sensitive to $\gamma$. Therefore there is a complicated relationship between the deterministic matrix $A$, the random perturbation $H$ and the halting time that is not captured by Theorem~\ref{t:smooth}.

\subsubsection{Perturbed eigenvalue ``clusters''}
In our second example, we consider random perturbations of a singular matrix with clusters of eigenvalues. This construction is motivated by \cite[Section~5.6.5]{Liesen2013} and~\cite{Greenbaum1992}.

We def\/ine $A$ to be the $mk \times mk$ diagonal matrix obtained by sampling the Marchenko--Pastur law as follows\footnote{We use the Marchenko--Pastur law because it gives the asymptotic density of eigenvalues of the LUE we are considering~\cite{Marcenko1967}.}. Let $\zeta_{j,k}$, $j = 1, \ldots, k$ be def\/ined by
\begin{gather*}
\zeta_{j} = \zeta_{j,k} = \min\left\{ 0 \leq t \leq 4\colon \int_{-\infty}^t \frac{1}{2\pi} \sqrt{\frac{4-x}{x}} dx = \frac{j}{k} \right\}.
\end{gather*}
Then we def\/ine for $1 \leq j \leq k, 1 \leq \ell \leq m$
\begin{gather}\label{e:spread}
 \lambda_{\ell,j} = \begin{cases} 0, & j = 0,\\
 \zeta_j + \left(\dfrac{\ell - \lfloor m/2 \rfloor}{m}\right) \dfrac{1}{10k^2}, & j > 0. \end{cases}
\end{gather}
Finally, set $A=M_{m,k} = \diag(\lambda_{\ell,j})$ with any (consistent) ordering.

This produces a $mk \times mk$ diagonal matrix with $m$ zero eigenvalues, and $k(m-1)$ eigenvalues that are each clustered at quantiles of the Marchenko--Pastur law. Note that
\begin{gather*}
\frac{1}{k} = \int_{-\infty}^{\zeta_{1,k}} \frac{1}{2\pi} \sqrt{\frac{4-x}{x}} dx \sim c \zeta_{1,k}^{1/2}, \qquad k \to \infty,
\end{gather*}
and so $\zeta_{1,k} = \bigo(k^{-2})$. We divide by $k^2$ in \eqref{e:spread} to ensure we maintain a positive semi-def\/inite matrix. As in the previous section we set $m = k = \lfloor \sqrt{N} \rfloor $ in the computations and plot similar sample mean results in Fig.~\ref{f:clusters}. Despite the fact that $M_{m,k}$ is a singular matrix, the conjugate gradient algorithm converges rapidly for the perturbed matrix $M_{m,k} + \sigma^2 H$. In particular, Fig.~\ref{f:clusters2} shows a rate of growth that is only $O(\sqrt{\log N})$.

Finally, in the construction of $M_{m,k}$ we imposed the condition that $m$ eigenvalues are zero. If we considered a case where more and more eigenvalues are set to zero as $m,k \to \infty$ we would expect a transition to the $\sigma = \infty$ case.

\begin{figure}[th!]\centering
\subfigure[]{\includegraphics[width=.45\linewidth]{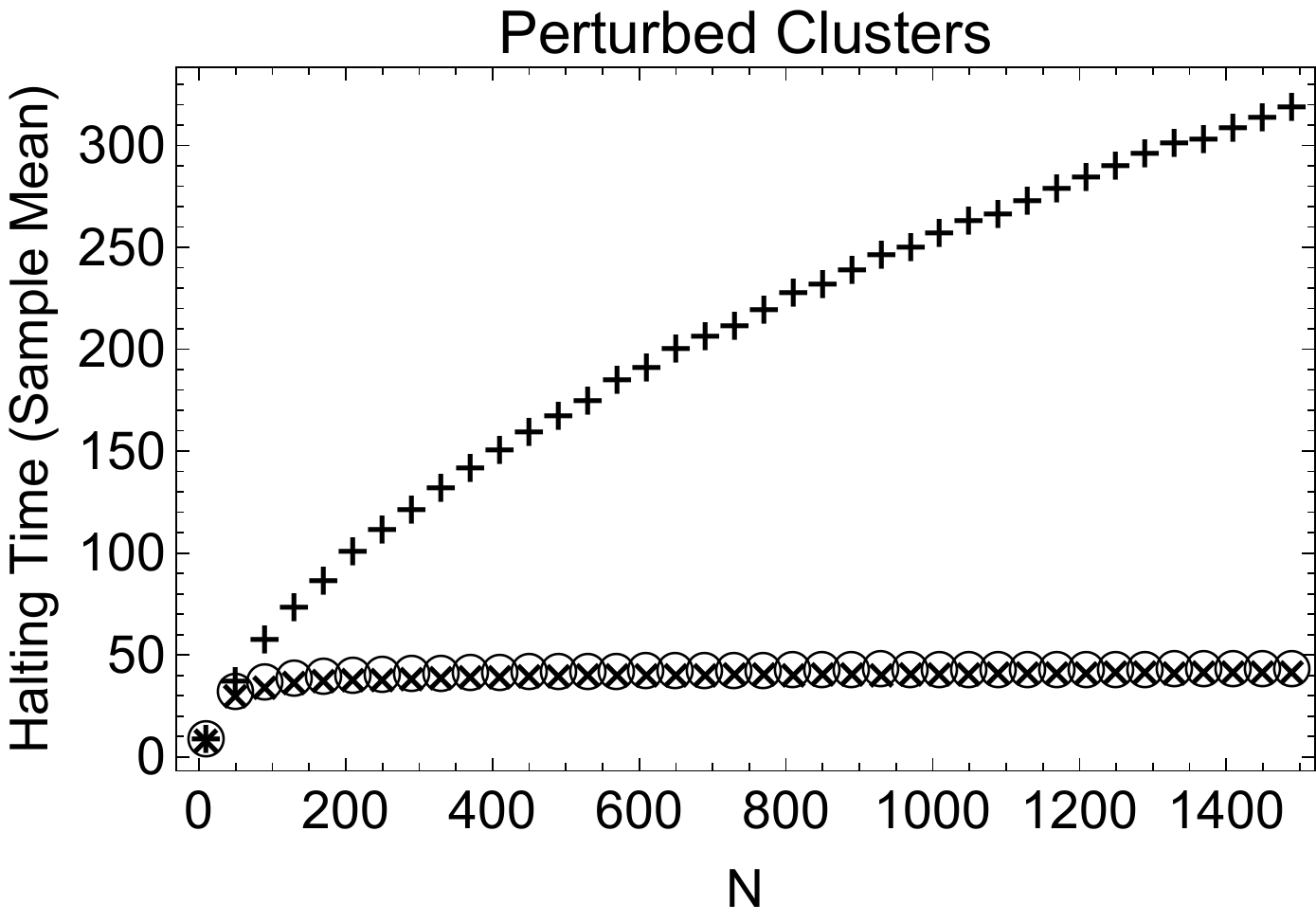}\label{f:clusters1}}\hfill
\subfigure[]{\includegraphics[width=.45\linewidth]{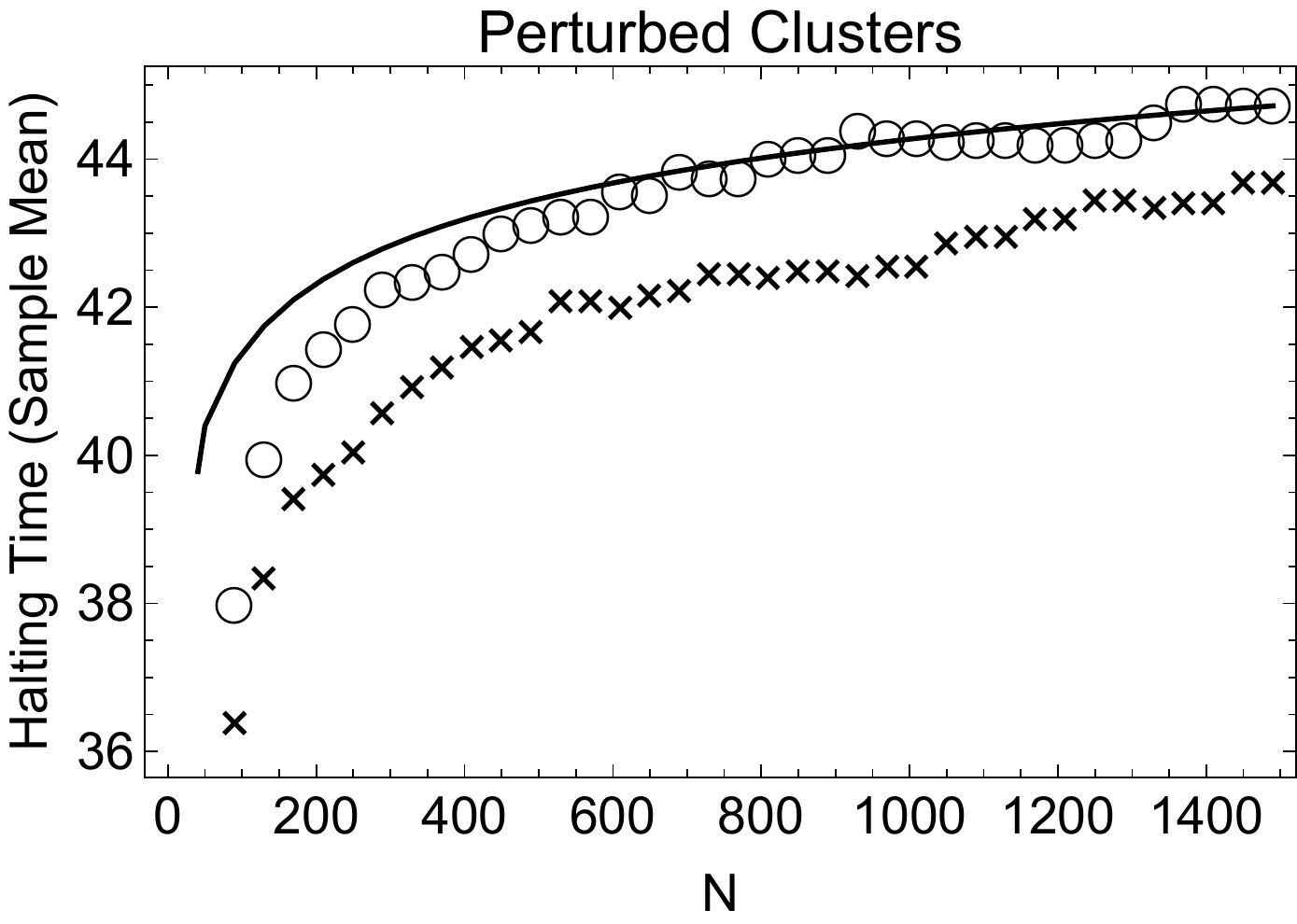}\label{f:clusters2}}
\caption{Numerical computations for sample mean of the halting time in the case of a randomly perturbed eigenvalue clusters: $M_{m,k} + \sigma^2 H$. Again, we let $m = k = \lfloor \sqrt{N} \rfloor$ and make the following choices of parameter values $\sigma = 0.1$, $\gamma = 1/2$~($\circ$), $\sigma = 0.1$, $\gamma = 1/3$ ($\times$) and $\sigma = \infty$, $\gamma = 1/2$~($+$). (a)~The sample mean for all four parameter choices. (b)~Three parameter choices plotted along with the function $6 \log^{1/2} (1+N) + 28.5$ to demonstrate how slow the halting time grows for $0 < \sigma < \infty$.}\label{f:clusters}
\end{figure}

\section{Estimating the halting time}

\subsection{Outline of the proof}

In this section, we explain the main steps in the proof of Theorem~\ref{t:smooth}. We also abstract the properties that are known to hold for LUE perturbations (as established in Section~\ref{s:LUE}), stating these estimates as a general condition on the tails of the smallest and largest eigenvalues that suf\/f\/ice to prove Theorem~\ref{t:smooth}.

In order to explain the main idea, we focus on controlling the halting time $\tau_{w,\epsilon}$ using estimate~\eqref{rut}. For brevity, let us def\/ine the parameter
\begin{gather*}
\theta(\kappa) = \frac{\sqrt{\kappa}-1}{\sqrt{\kappa}+1}.
\end{gather*}
Since $\kappa \geq 1$, the parameter $\theta(\kappa) \in [0,1)$. Let us also def\/ine the positive real number
\begin{gather*}
K_\epsilon (\kappa) = \inf_{k >0} \big\{\theta(\kappa)^k \leq \epsilon/2\big\} = \frac{\log \epsilon/2}{\log \theta(\kappa)}.
\end{gather*}
It follows immediately from \eqref{haltA} and the normalization $\|r_0\|_{\ell^2}=1$ that $\tau_{w,\epsilon} \leq K_\epsilon$, so that for every $a \geq 0$,
\begin{gather*}
\mathbb{P}\left(\tau_{w,\epsilon}>a \right)	 \leq \mathbb{P} \left(K_\epsilon >a\right) =
\mathbb{P} \left( \log \epsilon/2 < a \log \theta(\kappa)\right).
\end{gather*}
Note that $\log \theta(\kappa)<0$ and that as $\kappa \to \infty$,
\begin{gather*}
\log \theta(\kappa) \sim -\frac{2}{\sqrt{\kappa}}.
\end{gather*}
Thus, basic convergence properties of the conjugate gradient algorithm may be obtained from tail bounds on the condition number. Finally, the condition number is estimated as follows. Since $\kappa = \lambda_{\max}/\lambda_{\min}$ it is clear that upper bounds of the form $\mathbb{P}(\lambda_{\max}>t)$ and $\mathbb{P}(\lambda_{\min}^{-1}>t)$ for arbitrary $t \in (0,\infty)$ may be combined to yield an upper bound on $\mathbb{P}(\kappa>a)$ by suitably choosing~$t(a)$. As noted in the f\/irst three lines of the proof of Theorem~\ref{t:smooth} below, estimates of upper and lower eigenvalues for Wishart ensembles established in~\cite{Deift2015} immediately extend to estimates for matrices of the form~$A + \sigma^2H$.

\subsection{A general suf\/f\/icient condition}
The abstract property we use to establish Theorem~\ref{t:smooth} is the following.

\begin{Condition}\label{cond:tails}
Given a random positive-definite matrix~$H$, assume there exist positive constants~$c_1$ and $\delta$, constants $C_1$, $C_2$, and $a$ that are greater than $1$, and a positive function $f\colon (0,\infty) \to (0,\infty)$ such that
\begin{gather}
T_{\max}(t) := \mathbb P(\lambda_{\max}(H) > t) \leq \tilde T_{\max}(t) := C_1 \E^{-c_1N(t-a)}, \qquad t \geq 1,\notag\\
T_{\min}(t) := \mathbb P(\lambda_{\min}^{-1}(H) > t) \leq \tilde T_{\min}(t) := C_2 [t/f(N)]^{-\alpha/2}, \qquad t \geq (1+\delta)f(N).\label{e:min}
\end{gather}
Assume further that $T_{\max/\min}$ are strictly monotone functions of $t$ and $\lim\limits_{N \to \infty} f(N) = \infty$.
\end{Condition}
While the conditions above seem arbitrary at f\/irst sight, we will show how they emerge naturally for the LUE ensemble in the next section. In particular, we show that these conditions are satisf\/ied by a class of LUE matrices in Lemmas~\ref{l:LUEest} and~\ref{l:tail}.

\begin{Lemma}\label{l:cond-tail}
Assume Condition~{\rm \ref{cond:tails}} and that $\alpha$ grows with $N$ as in~\eqref{e:scaling}. Then there exists a~constant $C > 0$ such that
\begin{gather*}
\mathbb P(\kappa(H) > t) \leq C \left[a^{-1} t/f(N) \right]^{-\alpha/2 + e_N}, \qquad e_N = \half \frac{\alpha^2}{2 ac_1 N + \alpha},
\end{gather*}
when $t \geq a(1+\delta_N) f(N)$ where
\begin{gather*}
\delta_N = (1+\delta)\big(1 + C\big(N^{-1} + \delta N^{\gamma-1}\big)\big)-1.
\end{gather*}
\end{Lemma}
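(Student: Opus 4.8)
The plan is to estimate the condition number via the elementary relation $\kappa(H) = \lambda_{\max}(H)\,\lambda_{\min}^{-1}(H)$ together with a union bound over a splitting level $s = s(t,N) > 0$. Since $\{\kappa(H) > t\} \subseteq \{\lambda_{\max}(H) > s\} \cup \{\lambda_{\min}^{-1}(H) > t/s\}$, Condition~\ref{cond:tails} gives $\mathbb{P}(\kappa(H) > t) \le T_{\max}(s) + T_{\min}(t/s) \le \tilde T_{\max}(s) + \tilde T_{\min}(t/s)$ as soon as $s \ge 1$ and $t/s \ge (1+\delta)f(N)$. The whole argument then reduces to choosing $s$ so that \emph{both} of these terms are dominated by the target power $[a^{-1}t/f(N)]^{-\alpha/2 + e_N}$.

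The key point is that a pure power-law split works. Set $v := a^{-1}t/f(N)$ and $\beta_N := \alpha/(2ac_1 N + \alpha) \in (0,1)$; the definition of $e_N$ is precisely equivalent to the two identities $e_N = \half \alpha \beta_N$ and $\half \alpha - e_N = a c_1 N \beta_N$. I would take $s = a\,v^{\beta_N}$. Then $s \ge a > 1$, so $\tilde T_{\max}$ applies, and the convexity inequality $\E^{x} \ge 1 + x$ at $x = \beta_N \log v$ gives $v^{\beta_N} - 1 \ge \beta_N \log v$, whence $T_{\max}(s) \le C_1 \E^{-c_1 N a (v^{\beta_N}-1)} \le C_1 v^{-a c_1 N \beta_N} = C_1 v^{-(\alpha/2 - e_N)}$. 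On the other side $t/s = f(N)\,v^{1-\beta_N}$, so $\tilde T_{\min}$ applies provided $v^{1-\beta_N} \ge 1+\delta$, and since $\half\alpha(1-\beta_N) = \half\alpha - e_N$ one gets $T_{\min}(t/s) \le C_2 (v^{1-\beta_N})^{-\alpha/2} = C_2 v^{-(\alpha/2 - e_N)}$. Summing, $\mathbb{P}(\kappa(H) > t) \le (C_1 + C_2)\,v^{-(\alpha/2 - e_N)} = (C_1 + C_2)[a^{-1}t/f(N)]^{-\alpha/2 + e_N}$, which is the claimed bound with $C = C_1 + C_2$ (before the final inflation below).

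It remains to identify the admissible range of $t$, i.e., the $t$ for which the hypothesis $v^{1-\beta_N} \ge 1+\delta$ used above holds: this is $v \ge (1+\delta)^{1/(1-\beta_N)}$, equivalently $t \ge a(1+\delta)^{1/(1-\beta_N)}f(N)$, so one sets $1 + \delta_N$ equal to (a convenient explicit upper bound for) $(1+\delta)^{1/(1-\beta_N)}$. Using $1/(1-\beta_N) = 1 + \alpha/(2ac_1 N)$ and writing $(1+\delta)^{\alpha/(2ac_1 N)} = \E^{\xi_N}$ with $\xi_N = \alpha\log(1+\delta)/(2ac_1 N)$, one has $\xi_N \to 0$ because $\alpha \le \sqrt{4c}\,N^{\gamma}$ with $\gamma \le \half$, so $\E^{\xi_N} = 1 + \bigo(\xi_N)$ and $\xi_N = \bigo(\delta N^{\gamma-1})$, while the integer part in \eqref{e:scaling} contributes the remaining $\bigo(N^{-1})$; enlarging the constant to absorb these lower-order terms produces the displayed formula for $\delta_N$. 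Every step except this last one is just the union bound plus the inequality $\E^x \ge 1+x$; the one place needing genuine care — and the main (really the only) obstacle — is matching the admissibility condition $t/s \ge (1+\delta)f(N)$ to the stated closed form of $\delta_N$ and honestly tracking the $\bigo(N^{-1})$ and $\bigo(\delta N^{\gamma-1})$ corrections.
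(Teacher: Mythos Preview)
Your argument is correct and follows the same underlying strategy as the paper: a union bound on $\{\lambda_{\max} > s\}\cup\{\lambda_{\min}^{-1} > t/s\}$ with the split chosen to balance the two tails, together with the conversion of the exponential tail $\tilde T_{\max}$ into a power law via $\E^x \ge 1+x$ (which the paper writes in the equivalent form $(1+\tfrac{\log t}{n})^n \le t$). The only packaging difference is that the paper parametrizes the split by a common \emph{probability level}~$s$, setting $T_{\max}(\cdot)=T_{\min}(\cdot)=s$ and then inverting $T^{-1}(s)=T_{\max}^{-1}(s)T_{\min}^{-1}(s)$, whereas you parametrize directly by the eigenvalue threshold $s = a v^{\beta_N}$. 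These are dual descriptions of the same optimization and produce the identical exponent $\alpha/2 - e_N$; your version is arguably the more transparent of the two.

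One small correction: your attribution of the $\bigo(N^{-1})$ term in $\delta_N$ to the integer part in \eqref{e:scaling} is not right---nothing in your admissibility condition $v^{1-\beta_N}\ge 1+\delta$ involves the floor, and your threshold $(1+\delta)^{1/(1-\beta_N)} = (1+\delta)\big(1+\bigo(\delta N^{\gamma-1})\big)$ carries no separate $N^{-1}$ contribution. In the paper that term arises from the prefactor $\log(C_1/C_2)/(ac_1 N)$ retained when inverting $\tilde T_{\max}$. This is harmless for the lemma, since enlarging $\delta_N$ only shrinks the asserted range of~$t$; your closing remark that one may ``enlarge the constant to absorb these lower-order terms'' is exactly the right fix, just with a different provenance for the $N^{-1}$.
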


\begin{proof}
First, if $xy > ab$ and all these numbers are positive, then either $x > a$ or $y > b$. Thus for $0 \leq s \leq 1$, the tails bounds of Condition~\ref{cond:tails} imply
\begin{gather*}
\mathbb P\left( \frac{\lambda_{\max}(H)}{\lambda_{\min}(H)} > T^{-1}_{\max}(s)T^{-1}_{\min}(s)\right) \leq 2 s.
\end{gather*}
This bound may be `inverted' in the following way. If we def\/ine $T^{-1}(s) = T^{-1}_{\max}(s)T^{-1}_{\max}(s)$, then
\begin{gather*}
\mathbb P\left( \frac{\lambda_{\max}(H)}{\lambda_{\min}(H)} > t\right) \leq 2 T(t).
\end{gather*}

Our goal is to obtain an upper bound on $T(t)$ using the upper bounds $\tilde{T}_{\max}$ and $\tilde{T}_{\min}$. If $T_{\max}(t_1) = s$ and $\tilde T_{\max}(t_2) = s$ then $t_2 \geq t_1$. Therefore,
\begin{gather*}
T^{-1}_{\max}(s) \leq a + \frac{\log C_1/s}{c_1 N}, \qquad s \leq T_{\max}(a),\\
T^{-1}_{\min}(s) \leq {f(N)}\left[\frac{s}{C_2}\right]^{-2/\alpha}, \qquad s \leq T_{\min}((1+\delta)f(N)).
\end{gather*}
Since $(1 + \log t/n)^n \leq t$ when $t>1$ and $n \geq 1$, we can estimate
\begin{gather*}
a + \frac{\log C_1/s}{c_1 N} = a \left( 1+ \frac{\log C_1/s}{a c_1 N} \right)\leq a\left(\frac{C_1}{s}\right)^{1/(ac_1N)}.
\end{gather*}
Hence
\begin{gather*}
T^{-1}(s) \leq aC^{1/(ac_1N)}_1 f(N)\left[\frac{s}{C_2}\right]^{-2/\alpha} s^{-1/(ac_1N)}, \qquad\! s \leq \min\{T_{\max}(a),T_{\min}((1+\delta)f(N))\}.
\end{gather*}
Inverting this expression, we f\/ind
\begin{gather*}
T(t) \leq \left[a^{-1}t/f(N) C_1^{-1/(ac_1N)} C_2^{-2/\alpha}\right]^{-\alpha/2 + e_N}, \qquad e_N = \half \frac{\alpha^2}{2 ac_1 N + \alpha},\\
t \geq \max\big\{T^{-1}(T_{\max}(a)), T^{-1}(T_{\min}((1+\delta) f(N)))\big\}.
\end{gather*}

Let us examine this lower bound more carefully. We increase $C_1$ so that $C_1 \leq C_2$, if necessary, so that
\begin{gather*}
T^{-1}(T_{\max}(a)) = a T_{\min}^{-1}(T_{\max}(a)) \leq af(N) \left[ \frac{C_2}{C_1}\right]^{2/\alpha} \leq a f(N),\\
T^{-1}(T_{\min}((1+\delta)f(N)) = (1+\delta)f(N) T_{\max}^{-1}(T_{\min}((1+\delta) f(N))) \\
\hphantom{T^{-1}(T_{\min}((1+\delta)f(N))}{}
\leq a(1+\delta) f(N) \left(1 + \frac{\log C_1/C_2 + \alpha/2 \log (1 + \delta)}{a c_1 N} \right) \\
\hphantom{T^{-1}(T_{\min}((1+\delta)f(N))}{}
\leq a(1+\delta)(1+ CN^{-1}(1 + \alpha \delta)) f(N),
\end{gather*}
where $C$ is a suitable constant. Then using the assumption~\eqref{e:scaling} and $C_2 \geq 1$, $C_1^{\alpha/(2ac_1N)}C_2^{1-2e_N/\alpha}$ is bounded by a constant, say, $C/2 > 0$. This establishes the lemma.
\end{proof}

Let $\Sigma_N$ denote the set of $N \times N$ strictly positive def\/inite complex matrices and recall that the constant $\delta_N$ is def\/ined in Lemma~\ref{l:cond-tail}. The following lemma is applied to control the halting time in terms of the condition number, and the reader may turn to the lemmas that follow to see instances of functions $g$.

\begin{Lemma}\label{l:exp-est}
Let $g\colon [1,\infty) \to \mathbb R$ be continuous and differentiable on $(1,\infty)$. Assume $g$ satisfies $g(1) =0$ and $g'(x) \leq C x^\ell$ for $C> 0$, $\ell \in \mathbb R$ and $x$ sufficiently large. Assume a function $M\colon \Sigma_N \to \mathbb R$ satisfies $M(H) \leq g(\kappa(H))$. Then if $H$ satisfies Condition~{\rm \ref{cond:tails}} and $\alpha/2-e_N-\ell > 1$ there exist constants $C,K > 0$ such that
\begin{gather*}
\mathbb E[M(H)] \leq g(b_N) + C(1+\delta_N)^{-\alpha/K}\frac{b_N^{1+\ell}}{\alpha/2-e_N-\ell-1},
\end{gather*}
for $b_N = a f(N)(1+\delta_N)$.
\end{Lemma}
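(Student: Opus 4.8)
The plan is to split $\mathbb{E}[M(H)]$ according to whether $\kappa(H)$ is below or above the threshold $b_N = a(1+\delta_N)f(N)$, and to use the tail bound from Lemma~\ref{l:cond-tail} on the high-$\kappa$ part. Since $M(H) \le g(\kappa(H))$ and $g$ is increasing for large argument (because $g(1)=0$ and $g'(x) \le Cx^\ell$ forces no lower bound on monotonicity, but we only need the bound $g(\kappa) \le g(b_N) + \int_{b_N}^{\kappa} g'(x)\,\D x$ for $\kappa \ge b_N$), I would write
\begin{gather*}
\mathbb{E}[M(H)] \le g(b_N) + \mathbb{E}\left[ \big(g(\kappa(H)) - g(b_N)\big)\mathbf{1}_{\kappa(H) > b_N}\right]
\le g(b_N) + \mathbb{E}\left[ \mathbf{1}_{\kappa(H)>b_N}\int_{b_N}^{\kappa(H)} g'(x)\,\D x\right].
\end{gather*}
The first term is exactly the $g(b_N)$ appearing in the statement, so everything reduces to controlling the second term.

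For the second term I would interchange the order of integration (Tonelli, since $g' \le Cx^\ell$ is eventually nonnegative and $b_N \to \infty$ so we are in the regime where $g'(x) \le Cx^\ell$ applies throughout $[b_N,\infty)$):
\begin{gather*}
\mathbb{E}\left[ \mathbf{1}_{\kappa(H)>b_N}\int_{b_N}^{\kappa(H)} g'(x)\,\D x\right]
= \int_{b_N}^\infty g'(x)\, \mathbb{P}(\kappa(H) > x)\,\D x
\le C\int_{b_N}^\infty x^\ell\, \mathbb{P}(\kappa(H)>x)\,\D x.
\end{gather*}
Now I invoke Lemma~\ref{l:cond-tail}: for $x \ge b_N = a(1+\delta_N)f(N)$ we have $\mathbb{P}(\kappa(H)>x) \le C'[a^{-1}x/f(N)]^{-\alpha/2+e_N}$. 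Substituting and writing $u = a^{-1}x/f(N)$ (so $x = af(N)u$, $\D x = af(N)\,\D u$, and the lower limit becomes $u = 1+\delta_N$), the integral becomes a constant times $(af(N))^{1+\ell}\int_{1+\delta_N}^\infty u^{\ell}\, u^{-\alpha/2+e_N}\,\D u$. Under the hypothesis $\alpha/2 - e_N - \ell > 1$ the exponent $\ell - \alpha/2 + e_N < -1$, so this integral converges and equals
\begin{gather*}
\frac{(1+\delta_N)^{\ell - \alpha/2 + e_N + 1}}{\alpha/2 - e_N - \ell - 1}.
\end{gather*}
Collecting the constants and using $b_N = af(N)(1+\delta_N)$ to rewrite $(af(N))^{1+\ell}(1+\delta_N)^{1+\ell} = b_N^{1+\ell}$, one is left with a bound of the form $C\,(1+\delta_N)^{e_N - \alpha/2}\, b_N^{1+\ell}/(\alpha/2 - e_N - \ell - 1)$; absorbing $e_N$ into the exponent (it is bounded by a constant times $\alpha$ by the formula $e_N = \tfrac12\alpha^2/(2ac_1N+\alpha)$ together with \eqref{e:scaling}, so $e_N - \alpha/2 \le -\alpha/K$ for a suitable $K$) gives precisely the stated estimate.

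The main obstacle is bookkeeping rather than conceptual: one must make sure that all the applications of the tail bound and of $g'(x)\le Cx^\ell$ are valid on the whole range $[b_N,\infty)$, which is where $\lim_{N\to\infty}f(N)=\infty$ (and hence $b_N\to\infty$) is used, and one must track how $e_N$ and $\delta_N$ enter so that the final exponent can be cleanly written as $-\alpha/K$. A minor subtlety is that $g$ need not be monotone on all of $[1,\infty)$; but we only ever use $g(\kappa)-g(b_N) = \int_{b_N}^\kappa g'$ for $\kappa > b_N$ with $b_N$ large, where $g' \le Cx^\ell$ controls the increment from above, and the contribution from $\kappa \le b_N$ is handled by the crude bound $g(\kappa) \le g(b_N)$ only after noting $g$ is eventually nondecreasing — or, more safely, by replacing $g(b_N)$ with $\sup_{1\le x\le b_N} g(x)$, which is $\le g(b_N) + (\text{lower-order})$ and does not affect the leading behavior. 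I would state the lemma's conclusion with the understanding that $N$ is large enough that $b_N$ lies in the region where all these estimates apply.
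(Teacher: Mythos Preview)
Your argument is correct and essentially the same as the paper's. The only cosmetic difference is the order of operations: the paper writes the layer-cake identity $\mathbb{E}[g(\kappa(H))]=\int_1^\infty g'(s)\,\mathbb{P}(\kappa(H)>s)\,\D s$ on all of $[1,\infty)$ first and then splits the integral at $b_N$ (using $\mathbb{P}\le 1$ on $[1,b_N]$ to recover $g(b_N)$), whereas you split the expectation at $\{\kappa\le b_N\}$ first and then apply Fubini on the upper piece; the resulting tail integral and its evaluation are identical. Your remark about monotonicity is in fact a point both arguments share implicitly: the paper's bound $\int_1^{b_N} g'(s)\,\mathbb{P}(\kappa>s)\,\D s\le\int_1^{b_N} g'(s)\,\D s$ also tacitly uses $g'\ge 0$, which holds for all the functions $g$ actually used in Lemma~\ref{l:gs}.
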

\begin{proof}
First,
\begin{gather*}
\mathbb E[M(H)] \leq E[g(\kappa(H))],
\end{gather*}
and by integration by parts
\begin{gather*}
\mathbb E[g(\kappa(H))] = \int_1^\infty g(s) \D \mathbb P(\kappa(H) \leq s) = \int_1^\infty g'(s) \mathbb P(\kappa(H) > s) \D s.
\end{gather*}
Using Lemma~\ref{l:cond-tail} for $b_N = a f(N)(1+\delta_N)$ and $N$ suf\/f\/iciently large
\begin{gather*}
\int_1^\infty g'(s) \mathbb P(\kappa(H) > s) \D s \leq \int_1^{b_N} g'(s) \D s + C \left( \frac{b_N}{1 + \delta_N} \right)^{-\alpha/2 + e_N} \int_{b_N}^\infty s^{-\alpha/2+e_N+\ell}\D s\\
\hphantom{\int_1^\infty g'(s) \mathbb P(\kappa(H) > s) \D s }{} \leq g(b_N) + C(1+\delta_N)^{-\alpha/K}\frac{b_N^{1+\ell}}{\alpha/2-e_N-\ell-1}.
\end{gather*}
This last inequality follows from the scaling \eqref{e:scaling}:
 \begin{gather*}
 e_N = \frac{\alpha}{2} \frac{1}{2a c_2 \alpha^{-1} N + 1} = \bigo\big(\alpha N^{\gamma -1}\big),
 \end{gather*}
 and hence $\alpha/2 - e_N = O(\alpha)$. For any f\/ixed $\ell$ there exists a constant $K = K_\ell$
 \begin{gather}\label{e:eN}
 \frac{1}{\alpha/2 - e_N - \ell -1} \leq K/\alpha.
 \end{gather}
\end{proof}
We apply this lemma to the following functions.
\begin{Lemma}\label{l:gs} Let $b$ be a fixed vector then for any $j > 0$
\begin{enumerate}[$(1)$]\itemsep=0pt
\item \textbf{Halting time with the $\ell^2$ norm:} $\tau_{\epsilon}(H,b)^j \leq g(\kappa(H))^j$ where
\begin{gather*}
	g(s) = \frac{\log \sqrt{s} 2\epsilon^{-1}}{\log \left( \frac{\sqrt s + 1}{\sqrt s -1}\right)}.
\end{gather*}
Further, for every $\eta>0$ and $\epsilon >0$ there exists a constant $C_{\epsilon,\eta}$ such that for $s \in [1,\infty)$
\begin{gather*}
 	g(s) \leq \half \sqrt s \log \sqrt{s} 2\epsilon^{-1} \leq C_{\epsilon,\eta} s^{1/2+\eta}, \qquad
 g'(s) \leq \half \frac{\log \sqrt{s} 2\epsilon^{-1}}{\sqrt{s}} \leq C_{\epsilon,\eta} s^{-1/2+\eta}.
\end{gather*}
\item \textbf{Halting time with the weighted norm:} $\tau_{w,\epsilon}(H,b)^j \leq g(\kappa(H))^j$ where
\begin{gather*}
	g(s) = \frac{\log 2\epsilon^{-1}}{\log \left( \frac{\sqrt s + 1}{\sqrt s -1}\right)}.
\end{gather*}
This function $g$ satisfies the following estimates
\begin{gather*}
g(s) \leq \half \sqrt s \log 2\epsilon^{-1}, \quad g'(s) \leq \half \frac{\log 2\epsilon^{-1}}{\sqrt{s}}.
\end{gather*}
\item \textbf{Successive residuals:} For $r_k = r_k(A,b)$, $\left(\frac{\|r_{k+1}\|}{\|r_{k}\|}\right)^j \leq g(\kappa(H))^j$ where
\begin{gather*}
g(s) = \left( \frac{\sqrt{s}-1}{\sqrt{s}+1}\right) \leq 1, \qquad g'(s) \leq s^{-3/2},
\end{gather*}
and $\|\cdot\|$ stands for either $\|\cdot \|_{\ell^2}$ or $\|\cdot\|_A$.
\end{enumerate}
\end{Lemma}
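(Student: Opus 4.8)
\emph{Proof sketch.} The plan is to prove all three parts by the same two-layer scheme. First, a purely deterministic, realization-wise inequality bounds the conjugate-gradient quantity in question (the halting time, resp.\ the ratio of successive residuals) by an explicit increasing function $g$ of the condition number $\kappa = \kappa(H)$, obtained directly from the worst-case rates \eqref{rut} and \eqref{rutA} together with the definitions \eqref{haltA}. Second, one verifies the claimed growth bounds on $g$ and $g'$, which is the only place real computation is needed and which reduces to the single inequality
\[
q(s) := \log\frac{\sqrt s+1}{\sqrt s-1} = 2\arctanh\big(s^{-1/2}\big) \ge \frac{2}{\sqrt s}\,, \qquad s>1,
\]
valid since every coefficient of the power series of $\arctanh$ is nonnegative.

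For the pointwise bound in part (2): with $\|r_0\|_{w^{-1}}=1$, \eqref{rut} gives $\|r_{k+1}\|_{w^{-1}} \le 2\,\theta(\kappa)^{k+1}$, where $\theta(\kappa)=(\sqrt\kappa-1)/(\sqrt\kappa+1)\in[0,1)$, and this is $\le\epsilon$ as soon as $(k+1)\,q(\kappa) \ge \log 2\epsilon^{-1}$, i.e.\ $k+1 \ge g(\kappa)$ with $g(s)=\log 2\epsilon^{-1}/q(s)$. Hence $\tau_{w,\epsilon}(H,b) \le g(\kappa)$ (for $0<\epsilon\le 2$, so $g\ge 0$), and raising to the $j$-th power gives the claim. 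Part (1) is the same argument starting from \eqref{rutA}, which carries the extra factor $\sqrt\kappa$: $\|r_{k+1}\|_{\ell^2} \le 2\sqrt\kappa\,\theta(\kappa)^{k+1} \le \epsilon$ once $k+1 \ge \log(\sqrt\kappa\,2\epsilon^{-1})/q(\kappa) = g(\kappa)$, so $\tau_\epsilon(H,b)^j \le g(\kappa)^j$.

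For the pointwise bound in part (3) I would use the per-step contraction of the conjugate gradient iterates. Writing $r_k=Ae_k$ for the error $e_k$, so that $\|r_k\|_{w^{-1}}=\|e_k\|_A$, the optimality of CG over the nested Krylov spaces lets one estimate $\|e_{k+1}\|_A/\|e_k\|_A$ by comparing the optimal degree-$(k{+}1)$ residual polynomial with the product of the optimal degree-$k$ polynomial and a scaled Chebyshev factor of degree one on $[\lambda_{\min},\lambda_{\max}]$; the same argument passes to $\|\cdot\|_{\ell^2}$. This gives $\|r_{k+1}\|/\|r_k\| \le \theta(\kappa) = g(\kappa)$, hence $(\|r_{k+1}\|/\|r_k\|)^j\le g(\kappa)^j$. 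I expect this to be the main obstacle: the direct one-step Chebyshev comparison only yields the larger factor $(\kappa-1)/(\kappa+1)$, and securing the sharper rate $\theta(\kappa)=(\sqrt\kappa-1)/(\sqrt\kappa+1)$ — which is exactly what produces the $\rho_\sigma N^{1-\gamma}$ scaling in Theorem~\ref{t:smooth}(3) — is the point that needs care.

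It then remains to check the calculus estimates. For part (1), the bound on $q$ gives $g(s) = \log(\sqrt s\,2\epsilon^{-1})/q(s) \le \half\sqrt s\,\log\sqrt s\,2\epsilon^{-1}$, and since $\log\sqrt s\,2\epsilon^{-1} = \half\log s + \log 2\epsilon^{-1} \le C_{\epsilon,\eta}s^{\eta}$, the bound $g(s) \le C_{\epsilon,\eta}s^{1/2+\eta}$ follows. Differentiating the quotient,
\[
g'(s) = \frac{1}{2s\,q(s)} + \frac{\log\sqrt s\,2\epsilon^{-1}}{\sqrt s\,(s-1)\,q(s)^2}\,,
\]
and substituting $q(s)\ge 2/\sqrt s$ in both terms, with $s/(s-1)\le 2$ for $s\ge 2$, gives $g'(s) \le \half\,\log(\sqrt s\,2\epsilon^{-1})/\sqrt s$ for $s$ bounded away from $1$ (near $s=1$ one has $q(s)\to\infty$ and in fact $g'(s)\to\infty$, so this $g'$-bound should be read for $s$ large, which is all Lemma~\ref{l:exp-est} requires); then $g'(s)\le C_{\epsilon,\eta}s^{-1/2+\eta}$ as before. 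Parts (2) and (3) are the same computation with simpler numerators: for (2), $g(s)=\log 2\epsilon^{-1}/q(s)$ and one reuses the identical bounds; for (3), $g(s)=1-2/(\sqrt s+1)\le 1$ and $g'(s) = 1/\big(\sqrt s(\sqrt s+1)^2\big) \le s^{-3/2}$ directly, from $(\sqrt s+1)^2\ge s$.
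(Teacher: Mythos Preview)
Your approach is exactly the paper's: derive the realization-wise inequalities $\tau_\epsilon\le g(\kappa)$, $\tau_{w,\epsilon}\le g(\kappa)$, $\|r_{k+1}\|/\|r_k\|\le g(\kappa)$ from the worst-case rates \eqref{rut}--\eqref{rutA}, then check the growth of $g$ and $g'$ by the single inequality $q(s)=2\arctanh(s^{-1/2})\ge 2/\sqrt s$. The paper's own proof is a one-liner (``All the bounds follow from \eqref{rut} and \eqref{rutA} \ldots\ elementary manipulations''), so your write-up is simply a fleshed-out version of the same argument; your observation that the stated bound on $g'$ in part~(1) cannot hold uniformly down to $s=1$ (since $g'(s)\to\infty$ there) is correct and harmless, because Lemma~\ref{l:exp-est} only uses $g'(s)\le C s^\ell$ for $s$ large.

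Your flagged obstacle in part~(3) is real, and the paper does not resolve it either. Equation~\eqref{rut} is a bound on $\|r_k\|_{w^{-1}}/\|r_0\|_{w^{-1}}$, not on successive ratios, so it does not by itself yield $\|r_{k+1}\|/\|r_k\|\le\theta(\kappa)$. Your proposed route --- use optimality over nested Krylov spaces to write $\|e_{k+1}\|_A\le\|q(A)e_k\|_A$ for any degree-one $q$ with $q(0)=1$ and then minimize over $q$ --- is the standard device, but as you correctly anticipate it only produces the steepest-descent factor $(\kappa-1)/(\kappa+1)$, not $(\sqrt\kappa-1)/(\sqrt\kappa+1)$. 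The sharper per-step rate $\theta(\kappa)$ is \emph{not} a general fact about CG (individual steps can be slower than the average rate that \eqref{rut} encodes), so as literally stated part~(3) appears to be stronger than what \eqref{rut}--\eqref{rutA} give. For the purposes of Theorem~\ref{t:main-gen}(3) and Theorem~\ref{t:smooth}(3), however, the weaker one-step bound $g(s)=(s-1)/(s+1)$ would suffice: it still has $g(1)=0$, $g\le 1$, $g'(s)=2/(s+1)^2\le 2 s^{-2}$, and plugging $b_N\sim \rho_\sigma^2 N^{2(1-\gamma)}$ into $1-2/(b_N+1)$ gives the same leading order as the paper's stated conclusion. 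So the gap you identified in part~(3) is genuine but does not propagate to the main theorem.
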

\begin{proof}
All the bounds follow from \eqref{rut} and \eqref{rutA} as explained in the introduction to this section. The estimates on the functions $g(s)$, each of which satisf\/ies $g(1) = 0$, may be obtained by elementary manipulations.
\end{proof}
We can now prove our generalized result.
\begin{Theorem} \label{t:main-gen} Assume a random matrix $H$ satisfies Condition~{\rm \ref{cond:tails}} and $\alpha$ satisfies \eqref{e:scaling}. Then for $b_N = af(N)(1+\delta_N)$, $\delta_N > 0$ and, any vector $b = 1$, the following estimates hold:
\begin{enumerate}[$(1)$]\itemsep=0pt
\item \textbf{Halting time with the $\ell^2$ norm:}
 \begin{gather*}
 \mathbb E\big[\tau_{\epsilon}(H,b)^j\big] \leq \frac{1}{2^j} b_N^{j/2}\big(\log b_N^{1/2}\epsilon^{-1}\big)^j\big(1 + o\big(b_N^{-1/2}(1+\delta_N)^{-\alpha/K}\big)\big), \qquad \text{as}\quad N \to \infty.
 \end{gather*}
\item \textbf{Halting time with the weighted norm:}
 \begin{gather*}
 \mathbb E\big[\tau_{w,\epsilon}(H,b)^j\big] \leq \frac{1}{2^j} b_N^{j/2}\big(\log\epsilon^{-1}\big)^j\big(1 + O\big(b_N^{-1/2} N^{-\gamma}(1+\delta_N)^{-\alpha/K}\big)\big), \qquad \text{as} \quad N \to \infty.
 \end{gather*}
 \item \textbf{Successive residuals:} For $r_k = r_k(H,b)$
 \begin{gather*}
 \mathbb E\left[ \frac{\|r_{k+1}\|^j}{\|r_k\|^j}\right] \leq \left( 1 - \frac{2}{\sqrt{b_N}+1}\right)^j + O\big(b_N^{-1/2}N^{-\gamma}(1+\delta_N)^{-\alpha/K}\big), \qquad \text{as}\quad N \to \infty,
 \end{gather*}
 where $\|\cdot\|$ stands for either $\|\cdot \|_{\ell^2}$ or $\|\cdot\|_{w^{-1}}$.
\end{enumerate}
\end{Theorem}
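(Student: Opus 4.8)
The plan is to obtain each of the three bounds by feeding the corresponding comparison function $g$ of Lemma~\ref{l:gs} into Lemma~\ref{l:exp-est} and then reading off the asymptotics from the explicit estimates on $g$ and $g'$ recorded there. Fix $j>0$. For part~(1), Lemma~\ref{l:gs}(1) gives $\tau_{\epsilon}(H,b)^j\le g(\kappa(H))^j$, so I would apply Lemma~\ref{l:exp-est} with $M(H)=\tau_{\epsilon}(H,b)^j$ and with $g^j$ playing the role of the function called $g$ there. One first checks that $g^j$ is admissible: it is continuous on $[1,\infty)$ and differentiable on $(1,\infty)$, $g^j(1)=0$, and from the two power estimates $g(s)\le C_{\epsilon,\eta}s^{1/2+\eta}$, $g'(s)\le C_{\epsilon,\eta}s^{-1/2+\eta}$ of Lemma~\ref{l:gs}(1) one gets $(g^j)'=jg^{j-1}g'\le Cs^{j/2-1+\eta}$ for $s$ large, so $\ell=j/2-1+\eta$ for any $\eta>0$. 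The requirement $\alpha/2-e_N-\ell>1$, i.e.\ $\alpha>j(1+2\eta)+2e_N$, then holds for all large $N$ because $\alpha=\lfloor\sqrt{4c}N^\gamma\rfloor\to\infty$ while $e_N=\bigo\big(\alpha N^{\gamma-1}\big)=o(\alpha)$, as computed in the proof of Lemma~\ref{l:exp-est}. The same verification works for parts~(2) and~(3): in part~(2) the bounds $g(s)\le\half\sqrt{s}\log 2\epsilon^{-1}$ and $g'(s)\le\half s^{-1/2}\log 2\epsilon^{-1}$ give $\ell=j/2-1$ (no $\eta$ is needed since $\log 2\epsilon^{-1}$ is a constant), while in part~(3) the bounds $g(s)\le 1$ and $g'(s)\le s^{-3/2}$ give $(g^j)'\le j s^{-3/2}$, i.e.\ $\ell=-3/2$.

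With admissibility checked, Lemma~\ref{l:exp-est} gives, for $b_N=af(N)(1+\delta_N)$ and $N$ large,
\begin{gather*}
\mathbb E\big[M(H)\big]\le g(b_N)^j+C(1+\delta_N)^{-\alpha/K}\,\frac{b_N^{\,1+\ell}}{\alpha/2-e_N-\ell-1},
\end{gather*}
where $g$ is the Lemma~\ref{l:gs} function for the case at hand. For part~(1), substituting $g(b_N)\le\half\sqrt{b_N}\log\big(\sqrt{b_N}\,2\epsilon^{-1}\big)$ yields the leading term $\frac{1}{2^j}b_N^{j/2}\big(\log b_N^{1/2}\epsilon^{-1}\big)^j$, with the additive $\log 2$ inside the logarithm absorbed into a factor $1+o(1)$ since $b_N=af(N)(1+\delta_N)\to\infty$. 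For part~(2), substituting $g(b_N)\le\half\sqrt{b_N}\log 2\epsilon^{-1}$ produces the leading term $\frac{1}{2^j}b_N^{j/2}(\log 2\epsilon^{-1})^j$. For part~(3) one keeps $g(b_N)^j=\big(1-\frac{2}{\sqrt{b_N}+1}\big)^j$, which is already the stated leading term.

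It remains to show the second term on the right is of lower order than the first. By~\eqref{e:eN} the denominator $\alpha/2-e_N-\ell-1$ is of order $\alpha\asymp N^\gamma$, so the correction is $\bigo\big(N^{-\gamma}(1+\delta_N)^{-\alpha/K}b_N^{\,1+\ell}\big)$. The decisive feature is that $(1+\delta_N)^{-\alpha/K}$ decays faster than any power of $N$ --- since $\alpha\asymp N^\gamma\to\infty$ and $\delta_N\to\delta>0$ --- whereas $b_N$ grows at most polynomially in $N$ (as it does for the LUE perturbations to which Theorem~\ref{t:main-gen} is applied). Consequently the correction is negligible next to the leading term $\frac{1}{2^j}b_N^{j/2}(\log\cdots)^j$ in parts~(1)--(2), where dividing through produces the factor $1+o(1)$, and next to the $\Theta(1)$ leading term in part~(3), where it is retained as the additive $o(1)$. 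I expect the only genuine work to be this bookkeeping: confirming that $g^j$ meets the hypotheses of Lemma~\ref{l:exp-est} with the exponents $\ell$ above, and that the constants $C_1,C_2,a$ inherited from Condition~\ref{cond:tails}, the additive $\log 2$'s, and the free parameter $\eta$ are all swept into the stated error terms --- the substantive tail and integration-by-parts estimates having already been done in Lemmas~\ref{l:cond-tail}, \ref{l:exp-est} and~\ref{l:gs}.
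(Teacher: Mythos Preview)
Your proposal is correct and follows essentially the same route as the paper: apply Lemma~\ref{l:exp-est} to $M(H)=g(\kappa(H))^j$ with the comparison functions $g$ from Lemma~\ref{l:gs}, compute the admissible exponent $\ell$ for $(g^j)'$, and then read off the leading term $g(b_N)^j$ while the tail contribution is suppressed by the factor $(1+\delta_N)^{-\alpha/K}$ together with $1/(\alpha/2-e_N-\ell-1)\asymp N^{-\gamma}$. Your values of $\ell$ differ from the paper's by $1/2$ in parts~(1)--(2) (you get $(j-2)/2$ versus the paper's $(j-3)/2$); this only shifts the cosmetic $b_N^{-1/2}$ in the stated error and has no effect on the argument, since the super-polynomial decay of $(1+\delta_N)^{-\alpha/K}$ overwhelms any fixed power of $b_N$ once $b_N$ grows polynomially --- an assumption the paper also invokes when choosing $\eta$.
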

\begin{proof}
 Before we begin, we recall \eqref{e:eN}.

(1) \textbf{Halting time with the $\ell^2$ norm:} As $H$ satisf\/ies Condition~\ref{cond:tails} we can apply Lemma~\ref{l:exp-est} with the estimates in Lemma~\ref{l:gs}(1) for $j > 0$ and $\eta > 0$. We use that in this case
 \begin{gather*}
 \frac{\D}{\D s} g(s)^j = j g'(s) g(s)^{j-1} \leq jC_{\epsilon,\eta}^j s^{(j-3)/2+ j\eta},
 \end{gather*}
 and hence $\ell = (j-3)/2 + j \eta$ in Lemma~\ref{l:exp-est}. Therefore,
 \begin{gather*}
 \mathbb E\big[\tau_{\epsilon}(H,b)^j\big] \leq \frac{1}{2^j} b_N^{j/2}\big(\log b_N^{1/2}\epsilon^{-1}\big)^j + j C_\ell C_{\epsilon,\eta}^j(1+\delta_N)^{-\alpha/K} b_N^{j/2} b_N^{-1/2 + j \eta}N^{-\gamma}.
 \end{gather*}
 If $b_N = \bigo(N^\zeta)$ for some $\zeta > 0$, we choose $\eta > 0$ such that $j\zeta\eta < \gamma$. Thus,
 \begin{gather*}
 \mathbb E\big[\tau_{\epsilon}(H,b)^j\big] \leq \frac{1}{2^j} b_N^{j/2}\big(\log b_N^{1/2}\epsilon^{-1}\big)^j\big(1 + o\big(b_N^{-1/2}(1+\delta_N)^{-\alpha/K}\big)\big), \qquad \text{as} \quad N \to \infty.
 \end{gather*}

(2) \textbf{Halting time with the weighted norm:} We follow the same calculations as (1) with the estimates in Lemma~\ref{l:gs}(2) for $j > 0$. Here
 \begin{gather*}
 \frac{\D}{\D s} g(s)^j = j g'(s) g(s)^{j-1} \leq \frac{j}{2^{j+1}} s^{(j-3)/2}\big(\log \epsilon^{-1}\big)^j,
 \end{gather*}
 and hence $\ell = (j-3)/2 $ in Lemma~\ref{l:exp-est}. Therefore,
 \begin{gather*}
 \mathbb E\big[\tau_{w,\epsilon}(H,b)^j\big] \leq \frac{1}{2^j} b_N^{j/2}\big(\log \epsilon^{-1}\big)^j + j C_\ell \frac{1}{2^{j+1}} (1+\delta_N)^{-\alpha/K}b_N^{j/2} b_N^{-1/2}N^{-\gamma}.
 \end{gather*}
 Finally,
 \begin{gather*}
 \mathbb E\big[\tau_{w,\epsilon}(H,b)^j\big] \leq \frac{1}{2^j} b_N^{j/2}\big(\log \epsilon^{-1}\big)^j\big(1 + \mathcal O\big(b_N^{-1/2} N^{-\gamma}(1+\delta_N)^{-\alpha/K}\big)\big), \qquad \text{as}\quad N \to \infty.
 \end{gather*}

(3) \textbf{Successive residuals:} We follow the same calculations as (1), (2) with the estimates in Lemma~\ref{l:gs}(3) for $j > 0$. Here
 \begin{gather*}
 \frac{\D}{\D s} g(s)^j = j g'(s) g(s)^{j-1} \leq j s^{-3/2}
 \end{gather*}
 and hence $\ell = -3/2 $ in Lemma~\ref{l:exp-est}. Then with $R_k = \|r_{k+1}\|/\|r_k\|$
 \begin{gather*}
 \mathbb E[R_k^j] \leq \left( 1 - \frac{2}{\sqrt{b_N}+1}\right)^j + j C_\ell b_N^{-1/2}N^{-\gamma}(1+\delta_N)^{-\alpha/K}.
 \end{gather*}
 Therefore
 \begin{gather*}
 \mathbb E[R_k^j] \leq \left( 1 - \frac{2}{\sqrt{b_N}+1}\right)^j + \mathcal O\big(b_N^{-1/2} N^{-\gamma}(1+\delta_N)^{-\alpha/K}\big), \qquad \text{as}\quad N \to \infty.\tag*{\qed}
 \end{gather*}\renewcommand{\qed}{}
\end{proof}

The constant $\delta_N> 0$ is used in the above theorem to make precise the fact that if we integrate the tail of the condition number distribution just beyond $af(N)$ the error term is exponentially small as $\alpha \to \infty$.

\section{The Laguerre unitary ensemble}\label{s:LUE}

The following is well-known and may be found in~\cite[Section~2]{Forrester1993}, for example. This discussion is modif\/ied from \cite[Section~2]{Deift2015}. Let $W = XX^*$ where $X$ is an $N \times (N + \alpha)$ matrix of iid standard complex Gaussian random variables. Recall that the (matrix-valued) random variable $W$ is the Laguerre unitary ensemble (LUE). Then it is known that the eigenvalues $0 \leq \lambda_{\min} = \lambda_1 \leq \lambda_2 \leq \cdots \leq \lambda_N = \lambda_{\max}$ of $W$ have the joint probability density
\begin{gather*}
p_N(\lambda_1, \ldots, \lambda_N) = \frac{1}{C_N^{(\alpha)}} \prod_{j=1}^N \lambda_j^\alpha \E^{-\lambda_j} \prod_{1 \leq j < k \leq N} |\lambda_j-\lambda_k|^2.
\end{gather*}
 Recall that the Laguerre polynomials, $\{\La{j}(x)\}_{j=0}^\infty$, are a family of orthogonal polynomials on $[0,\infty)$, orthogonal with respect to the weight $e^{-x}x^\alpha$. We normalize them as follows~\cite{DLMF}
\begin{gather*}
\La{j}(x)= k_j x^j+ \bigo\big(x^{j-1}\big),\qquad 	k_j = \frac{(-1)^j}{j!}, \\
 \int_0^\infty \La{i}(x)\La{j}(x) e^{-x} x^{\alpha} \D x = \delta_{ij} \frac{\Gamma(j + \alpha + 1)}{j!}.
\end{gather*}
Then the following are orthonormal with respect to Lebesgue measure on $[0,\infty)$,
\begin{gather*}
\psi_j(x) := \left( \frac{j!}{\Gamma(j + \alpha + 1)}\right)^{1/2} e^{-x/2} x^{\alpha/2} \La{j}(x), \qquad \int_0^\infty \psi_j(x) \psi_i(x) \D x = \delta_{ij}.
\end{gather*}
Def\/ine the correlation kernel 
\begin{gather*}
\mathcal K_N (x,y)= \sum_{j=0}^{N-1} \psi_j(x) \psi_j(y), \qquad 0 < x,y < \infty.
\end{gather*}
The kernel $\mathcal{K}_N$ def\/ines a positive, f\/inite-rank and hence trace-class operator on~$L^2([a,b])$. To see that~$\mathcal{K}_N$ is positive, consider $f \in C^\infty((s,t))$ with compact support and note that
\begin{gather*}
\int_s^t \int_s^t \mathcal K_N(x,y) f(x) f^*(y) \D x \D y = \int_s^t \int_s^t \sum_{j=0}^{N-1} \psi_j(x)\psi_j(y) f(x) f^*(y) \D x \D y \\
\qquad{}= \sum_{j=0}^{N-1}\left(\int_s^t \psi_j(x) f(x) \D x\right) \left(\int_s^t \psi_j(x) f^*(x) \D x\right) = \sum_{j=0}^{N-1}\left|\int_s^t \psi_j(x) f(x) \D x \right|^2.
\end{gather*}
The eigenvalues $\lambda_1 \leq \cdots \leq \lambda_N$ may be described in terms of Fredholm determinants of the kernel $\mathcal{K}_N$ \cite{DeiftOrthogonalPolynomials,Forrester1993}. In particular, the statistics of the extreme eigenvalues are recovered from the determinantal formula
\begin{gather*}
\mathbb P\left( \text{no eigenvalues in } [a,b] \right) = \det \big(I - \mathcal K_N|_{L^2([a,b])}\big).
\end{gather*}
By the Christof\/fel--Darboux formula \cite{Szego1959}, we may also write
\begin{gather*}
\mathcal K_N (x,y) = \frac{N!}{\Gamma(N+ \alpha)} \left( \frac{\Gamma(N+ \alpha+1) \Gamma(N+\alpha)}{N!(N-1)!} \right)^{1/2} \left( \frac{\psi_{N-1}(x)\psi_N(y) - \psi_N(x) \psi_{N-1}(y)}{x-y} \right)\\
\hphantom{\mathcal K_N (x,y)}{} = \frac{N!}{\Gamma(N+ \alpha)} e^{-(x+y)/2} x^{\alpha/2}y^{\alpha/2} \frac{\La{N}(y) \La{N-1}(x) - \La{N}(x) \La{N-1}(y)}{x-y}.
\end{gather*}
Thus, questions about the asymptotic behavior of $\mathcal K_N(x,y)$ as $N \goto \infty$ reduce to the study of the large $N$ asymptotics of $\La{N}$ and $\La{N-1}$.

\subsection{Kernel estimates}
We use Fredholm determinants to show that Condition~\ref{cond:tails} holds with appropriate constants when $W = XX^*$ is distributed according to LUE. The main reference for these ideas is \cite{Simon2010}. Let $A\colon L^2([t,\infty)) \to L^2([t,\infty))$ be a positive trace-class operator with kernel $\mathcal K(x,y)$. Assume
\begin{gather*}
 \mathbb P(X \leq t) = \det(I - A_t),
\end{gather*}
then
\begin{gather*}
 \mathbb P(X > t) = | 1 - \det(I - A_t)| = | \det(I) - \det(I - A_t)| \\
\hphantom{\mathbb P(X > t)}{} \leq \left( \int_t^\infty |\mathcal K(x,x)| \D x \right) \exp\left(1 + \int_t^\infty |\mathcal K(x,x)| \D x \right).
 \end{gather*}
In this way we can get estimates on the tail directly from the large $x$ behavior of $\mathcal K(x,x)$. Similar considerations follow if, say, $A_t\colon L^2([0,t]) \to L^2([0,t])$.

As was done in \cite{Deift2015}, we consider the scaled kernel
\begin{gather*}
 \mathcal K^s_N(x,y) = \sum_{j=0}^{N-1} \mathfrak Z_j \pi_j(x)\pi_j(y)x^{\alpha/2} y^{\alpha/2} \E^{-\nu(x+y)/2}, \qquad \mathfrak Z_j^{-1} = \int_0^\infty \pi_j^2(x) x^{\alpha} y^{\alpha/2} \E^{-\nu x}\D x,\\
 \pi_j(x) = \frac{\La{j}(\nu x)}{\nu^j k_j} = x^j + \bigo\big(x^{j-1}\big),
\end{gather*}
so that
\begin{gather*}
 \mathbb P(\lambda_{\max}(W)/\nu \leq t) = \det \big(1- \mathcal K^s_N|_{L^2(t,\infty)}\big),\\
 \mathbb P(\lambda_{\min}(W)/\nu > t) = \det \big(1- \mathcal K^s_N|_{L^2(0,t)}\big).
\end{gather*}

Next, we pull results from~\cite{Deift2015} to estimate the kernel $\mathcal K^s_N(x,y)$ for LUE near the largest and smallest eigenvalue of~$W$. We f\/irst look for the asymptotics of $\mathcal K^s_N(x,y)$ for $(x,y) \approx (1,1)$, called the soft edge. Let $\check x = 1 + x/(2^{2/3} M^{2/3})$ and def\/ine
\begin{gather*}
 \check{\mathcal K}_N(x,y) = \mathcal K^s_N(\check x, \check y) \frac{1}{2^{2/3} M^{2/3}}, \qquad M = N + \half(\alpha + 1).
\end{gather*}
Then from \cite[Proposition~2]{Deift2015}:
 \begin{Proposition}\label{SoftKernelLimit}
As $N \to \infty$ the rescaled kernels converge pointwise,
\begin{gather*}
\check {\mathcal K}_N(x,y) \goto \frac{\Ai(x)\Ai'(y) - \Ai'(x)\Ai(y)}{x-y}, \qquad (x,y) \in \mathbb{R}^2,
\end{gather*}
and the convergence is uniform for $(x,y)$ in a compact subset of $[L,\infty)^2$ for any $L \in \mathbb{R}$. If $x=y$ then the limit is determined by continuity. Further, there exists a positive, piecewise-continuous function $\bar G\colon (L,\infty)^2 \to (0,\infty)$, such that
\begin{gather*}
|\check {\mathcal K}_N( x, y)| \leq \bar G(x,y), \qquad \int_{L}^\infty \int_{L}^\infty \bar G(x,y) \D x\, \D y < \infty, \qquad \int_{L}^\infty \bar G(x,x) \D x < \infty.
\end{gather*}
Furthermore, it suffices to take for a constant $C = C(L) > 0$
\begin{gather*}
 \bar G(x,y) = C\big( \bar g(x)^2 \chi_{|x-y| < 1}(x,y) + \bar g(x) \bar g(y)\big), \qquad \bar g(x) = \begin{cases}1, & \text{if} \ x < 0,\\ \E^{-\frac{1}{3}x^{3/2}}, & \text{if} \ x \geq 0. \end{cases}
\end{gather*}
 \end{Proposition}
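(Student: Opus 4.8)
The assertion is \cite[Proposition~2]{Deift2015}, so my plan is to recall how that result is obtained and to highlight the features of the scaling \eqref{e:scaling} that make it go through. The starting point is the Christoffel--Darboux representation recorded above: after the soft-edge rescaling $\check x = 1 + x/(2^{2/3} M^{2/3})$, $M = N + \half(\alpha+1)$, the kernel $\check{\mathcal K}_N(x,y)$ is an explicit bilinear combination of $\La{N}(\nu\check x)$, $\La{N-1}(\nu\check x)$ and the weight factors $\E^{-\nu\check x/2}(\check x)^{\alpha/2}$. Everything therefore reduces to the large-$N$ behaviour of the weighted Laguerre functions near the largest zero, uniformly in the regime $\alpha = \lfloor\sqrt{4c}\,N^\gamma\rfloor$, $0<\gamma\le 1/2$. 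I would obtain this from the Deift--Zhou steepest-descent analysis of the orthogonal-polynomial Riemann--Hilbert problem for the (varying) Laguerre weight, carrying the $\alpha$-dependence explicitly through the equilibrium measure, the hard-edge Bessel parametrix at the origin, the soft-edge Airy parametrix, and the global parametrix. Here the hypothesis $\gamma\le 1/2$ is essential: it forces $\alpha = \bigo(N^{1/2}) = o(N)$, so the equilibrium measure converges to the Marchenko--Pastur law with parameter $1$ (right endpoint $\nu = 4M(1+o(1))$), the hard edge stays macroscopically separated from the soft edge, and the RH error term is $o(1)$ uniformly on the sets that arise.

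For the pointwise and locally uniform limit, the soft-edge local parametrix built from Airy functions gives, uniformly for $x$ in a compact subset of $[L,\infty)$, that the suitably normalized $\La{N}(\nu\check x)\E^{-\nu\check x/2}(\check x)^{\alpha/2}$ converges to $\Ai(x)$ and the $N-1$ version to its companion combination, with matching convergence of first derivatives. Substituting into the Christoffel--Darboux form and removing the apparent pole on the diagonal by l'H\^opital --- justified by uniform convergence of the functions and their derivatives --- yields $\check{\mathcal K}_N(x,y)\goto \big(\Ai(x)\Ai'(y)-\Ai'(x)\Ai(y)\big)/(x-y)$ pointwise on $\mathbb R^2$ and uniformly on compact subsets of $[L,\infty)^2$.

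The dominating function comes out of the same analysis, which also supplies $N$-uniform pointwise bounds on the weighted Laguerre functions: to the right of the soft edge ($x>0$) WKB / the global parametrix gives decay of order $\E^{-\frac13 x^{3/2}}$, and to the left ($x<0$, toward the bulk) they are uniformly bounded --- this is exactly $\bar g$. Off the diagonal, $|x-y|\ge1$, I would bound $|\check{\mathcal K}_N(x,y)|$ by dividing the bounded Christoffel--Darboux numerator by $|x-y|\ge1$, giving $\le C\bar g(x)\bar g(y)$. On the strip $|x-y|<1$ I would use that $\check{\mathcal K}_N$ is positive-definite, so $|\check{\mathcal K}_N(x,y)|\le \check{\mathcal K}_N(x,x)^{1/2}\check{\mathcal K}_N(y,y)^{1/2}$, together with the diagonal estimate $\check{\mathcal K}_N(x,x)\le C\bar g(x)^2$ and the comparability of $\bar g$ over unit intervals on the bounded-below side, producing the term $C\bar g(x)^2\chi_{|x-y|<1}$. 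The integrals $\int\!\!\int\bar G$ and $\int\bar G(x,x)$ are then finite because $\int_L^\infty \E^{-\frac23 x^{3/2}}\,\D x<\infty$.

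The only genuinely delicate point --- and the step I expect to be the main obstacle --- is uniformity as $N\to\infty$ and $\alpha\to\infty$ simultaneously. Classical Plancherel--Rotach asymptotics for Laguerre polynomials are stated for fixed $\alpha$, so one must track the $\alpha$-dependence through every stage of the RH analysis and verify that all error terms stay $o(1)$ on the non-compact sets $[L,\infty)$ rather than merely on compacta; this is where $\gamma\le 1/2$ is used, and why \cite{Deift2015} --- and hence this proposition --- does not yet cover $1/2<\gamma\le 1$.
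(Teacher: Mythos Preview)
Your proposal is correct and matches the paper's approach: the paper does not prove this proposition at all but simply imports it verbatim as \cite[Proposition~2]{Deift2015}, exactly as you note in your first sentence. Your subsequent sketch of the Riemann--Hilbert/steepest-descent mechanism behind that reference is a reasonable summary of what is actually carried out there, and correctly flags the role of $\gamma\le 1/2$ in maintaining uniformity of the error terms.
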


For $\mathcal K^s_N(x,y)$ near $(0,0)$ as the scaling of the kernel depends critically on $\gamma$. So, we def\/ine
 \begin{gather*}
 \hat x = \frac{\alpha^2}{\nu^2}\left( 1 + x \left(\frac{2}{\alpha}\right)^{2/3} \right)
 \end{gather*}
and
 \begin{gather*}
 \hat{\mathcal K}_N(x,y) = \mathcal K^s_N(\hat x, \hat y) \frac{2^{2/3}\alpha^{4/3}}{\nu^2}.
\end{gather*}
The next proposition essentially follows directly from \cite[Proposition~1]{Deift2015} and is in fact a little simpler with the scaling chosen here.
\begin{Proposition}\label{HardKernelLimit}
As $N \to \infty$ the rescaled kernels converge pointwise,
\begin{gather*}
\hat {\mathcal K}_N(x,y) \goto \frac{\Ai(-y)\Ai'(-x) - \Ai'(-y)\Ai(-x)}{x-y}, \qquad (x,y) \in \mathbb{R}^2,
\end{gather*}
and the convergence is uniform for $(x,y)$ in any compact subset of $(-\infty,L]^2$ for any $L \in \mathbb{R}$. If $x=y$ then the limit is determined by continuity. Further, there exists a positive, piecewise-continuous function $G\colon (-\infty,L)^2 \to (0,\infty)$, such that
\begin{gather*}
|\hat {\mathcal K}_N( x, y)| \leq G(x,y), \qquad \int_{-\infty}^L \int_{-\infty}^L G(x,y) \D x \D y < \infty, \qquad \int_{-\infty}^L G(x,x) \D x < \infty.
\end{gather*}
Furthermore, it suffices to take for a constant $C = C(L) > 0$
\begin{gather*}
G(x,y) = C (\check g(x) \check h(x) \chi_{|x-y|<1} + \check g(x) \check g(y)),
\end{gather*}
where
\begin{gather*}
 \check g(x) = \begin{cases}
 0, & \text{if} \ -\infty < x \leq -(\alpha/2)^{2/3},\\
 \left[\left(1 + x\left(\dfrac{2}{\alpha} \right)^{2/3} \right) \right]^{d\alpha/4},& \text{if} \ -(\alpha/2)^{2/3} \leq x \leq \big(\ell(d)^2-1\big)(\alpha/2)^{2/3},\\
 \E^{-\frac{1}{6} |x+1|^{3/2}}, & \text{if} \ -b(\alpha/2)^{2/3} < x \leq -1,\\
 1, & \text{otherwise},\end{cases}\\
 \check h(x) = \begin{cases}
 0, & \text{if} \ -\infty < x \leq -(\alpha/2)^{2/3},\\
 \left[\left(1 + x\left(\dfrac{2}{\alpha} \right)^{2/3} \right) \right]^{d\alpha/4-1},& \text{if} \ -(\alpha/2)^{2/3} \leq x \leq \big(\ell(d)^2-1\big)(\alpha/2)^{2/3},\\
 \dfrac{1}{1-\ell(d)^2}\E^{-\frac{1}{6} |x+1|^{3/2}}, & \text{if} \ -b(\alpha/2)^{2/3} < x \leq -1,\\
1, & \text{otherwise}.
 \end{cases}
\end{gather*}
Here $\ell(d)< 1$ satisfies $\ell(d) = 1 + \bigo(d^{2/3})$ as $d \to 0$.
\end{Proposition}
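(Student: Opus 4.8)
The plan is to obtain the Proposition as an adaptation of the hard-edge analysis of \cite[Proposition~1]{Deift2015}, keeping track only of the modifications forced by allowing $\alpha = \lfloor\sqrt{4c}\,N^\gamma\rfloor$ over the whole range $0 < \gamma \le 1/2$ rather than a single exponent. Everything is driven by the Christoffel--Darboux representation of the scaled kernel,
\begin{gather*}
\mathcal K_N^s(x,y) = \mathfrak c_N\,\frac{\pi_N(x)\pi_{N-1}(y) - \pi_N(y)\pi_{N-1}(x)}{x - y}\,x^{\alpha/2}y^{\alpha/2}\E^{-\nu(x+y)/2},
\end{gather*}
with $\mathfrak c_N$ an explicit constant assembled from $\mathfrak Z_{N-1}$, $\nu$, and the leading coefficients of $\La{N}$, $\La{N-1}$. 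The whole problem thus reduces to the large-$N$ behaviour of $\pi_N$ and $\pi_{N-1}$ near the left endpoint of the support of the equilibrium measure for the weight $t^\alpha\E^{-\nu t}$. Since $\alpha/N \to 0$, that endpoint sits at $x_\ast = (\alpha^2/\nu^2)(1+o(1))$, the equilibrium density vanishes there like a square root, and the change of variables $\hat x = (\alpha^2/\nu^2)(1 + x(2/\alpha)^{2/3})$ both linearises this turning point (placing it at $x = 0$) and records the hard constraint $\hat x \ge 0$ as $x \ge -(\alpha/2)^{2/3}$. Note that $(\hat x - \hat y)/(x-y)$ equals exactly the rescaling factor $2^{2/3}\alpha^{4/3}/\nu^2$, so $\hat{\mathcal K}_N(x,y)$ has the same Christoffel--Darboux shape with $1/(x-y)$ in place of $1/(\hat x - \hat y)$.

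First I would import from \cite{Deift2015} the $N \to \infty$ asymptotics of $\La{N}(\nu\hat x)$ and $\La{N-1}(\nu\hat x)$ for $x$ in a compact subset of $(-\infty,L]$, obtained by a Riemann--Hilbert steepest-descent analysis with a local Airy parametrix at $x_\ast$. These asymptotics are precisely what is needed to write
\begin{gather*}
\hat{\mathcal K}_N(x,y) = \frac{\phi_N(x)\psi_N(y) - \phi_N(y)\psi_N(x)}{x - y},
\end{gather*}
where $\phi_N$, $\psi_N$ are built from $\pi_N(\hat x)\hat x^{\alpha/2}\E^{-\nu\hat x/2}$ and $\pi_{N-1}(\hat x)\hat x^{\alpha/2}\E^{-\nu\hat x/2}$ after the constant $\mathfrak c_N$ and the rescaling factor are distributed between the two; the content of the cited proposition is that $\phi_N(x) \to \Ai(-x)$ and $\psi_N(x) \to \Ai'(-x)$ uniformly on compact subsets of $(-\infty,L]$, up to the obvious sign bookkeeping. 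Passing to the limit then gives the pointwise and locally uniform convergence to $\big(\Ai(-y)\Ai'(-x) - \Ai'(-y)\Ai(-x)\big)/(x-y)$, with the diagonal value fixed by continuity; that the Airy rather than the Bessel kernel appears is exactly the effect of $\alpha \to \infty$.

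The remaining task is the domination $|\hat{\mathcal K}_N(x,y)| \le G(x,y)$, for which I would paste together the uniform bounds on $\La{N}(\nu\hat x)$ over the regions dictated by the equilibrium problem. For $x \le -(\alpha/2)^{2/3}$ one has $\hat x \le 0$, so the factor $\hat x^{\alpha/2}$ vanishes and the kernel is identically zero --- the branch $\check g \equiv 0$. Between this wall and the soft edge at $x = 0$, the Airy parametrix supplies exponential decay which, in the shifted local coordinate, appears as the $\E^{-\frac16|x+1|^{3/2}}$ branches of $\check g$ and $\check h$; just inside the support the normalised kernel is bounded by an absolute constant, giving the ``$1$'' branches; and the intermediate algebraic branch $\big[(1 + x(2/\alpha)^{2/3})\big]^{d\alpha/4}$, valid up to the matching point $(\ell(d)^2-1)(\alpha/2)^{2/3}$ with $\ell(d) = 1 + \bigo(d^{2/3})$, absorbs the factor $\hat x^{\alpha/2}$ together with the polynomial growth of $\La{N}$ and is calibrated through the parameter $d$ so as to match the exponential branch. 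Integrability of $G$ over $(-\infty,L]^2$ and of $G(x,x)$ over $(-\infty,L]$ is then immediate from the exponential and algebraic decay of $\check g$, $\check h$ at the two ends.

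The main obstacle is to verify that the Riemann--Hilbert error estimates --- the off-support exponential bounds, the matching of the local Airy parametrix onto the global one, and the small-norm inversion of the $R$-equation --- hold \emph{uniformly} as $\gamma$ ranges over $(0,1/2]$. This is exactly where the present scaling makes the argument ``a little simpler'' than in \cite{Deift2015}: the centring $\hat x = (\alpha^2/\nu^2)(1 + x(2/\alpha)^{2/3})$ puts the turning point at $x = 0$ with no lower-order drift to track, and for every $0 < \gamma \le 1/2$ one has simultaneously $\alpha \to \infty$, $x_\ast \to 0$, and the right endpoint of the support near $1$, so the two edges stay separated by an $\bigo(1)$ distance, the global parametrix is uniformly invertible, and the $R$-equation correction is uniformly $\bigo(\alpha^{-1})$. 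The region-by-region estimates of \cite{Deift2015} then carry over with $\alpha$ replaced throughout by $\lfloor\sqrt{4c}\,N^\gamma\rfloor$: only the implied constants, not the exponents, change.
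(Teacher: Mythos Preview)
Your overall architecture is right --- Christof\/fel--Darboux, local analysis at the left edge, then region-by-region domination --- but you have misidentif\/ied the mechanism that actually produces both the Airy limit and the algebraic branch of $\check g$. What \cite{Deift2015} supplies near the origin is not an Airy parametrix but a \emph{Bessel} parametrix: the weight carries the factor $x^\alpha$, so the local model at the hard edge is built from $\Ja$, and the Riemann--Hilbert output expresses the kernel in terms of $\Ja(\alpha t)$ and $\Ja'(\alpha t)$ after the change of variables $t = -\I\frac{\nu}{4\alpha}\phi_\rightarrow^+(z)$. The Airy kernel emerges only in a \emph{second} step, by invoking the uniform large-order expansion
\begin{gather*}
\Ja(\alpha t) = \left(\frac{4\zeta}{1-t^2}\right)^{1/4}\alpha^{-1/3}\big(\Ai\big(\alpha^{2/3}\zeta\big) + \cdots\big), \qquad \tfrac23\zeta^{3/2} = \int_t^1 \frac{\sqrt{1-s^2}}{s}\,\D s,
\end{gather*}
as $\alpha \to \infty$. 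This Bessel-to-Airy layer is exactly what the paper sketches immediately after the statement, and it is what you miss when you ``import from \cite{Deift2015}'' and expect to f\/ind Airy functions directly.

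That same $\zeta$-variable is also the source of the algebraic branch $[1 + x(2/\alpha)^{2/3}]^{d\alpha/4}$, and your account of it as ``absorbing $\hat x^{\alpha/2}$ together with the polynomial growth of $\La{N}$'' is not the actual argument. The mechanism is that for small $t$ one has the logarithmic lower bound $\zeta \ge d^{2/3}|\log t|^{2/3}$ on $0 \le t \le \ell(d)$, so the Airy-side decay $\E^{-\frac13\alpha\zeta^{3/2}}$ is dominated by $t^{d\alpha/2}$; pushing this through $t(\hat z) \le (1 + z(2/\alpha)^{2/3})^{1/2}$ gives the stated power, and plugging into \cite[Lemma~C.2]{Deift2015} yields the bound on the kernel. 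The cutof\/f $\ell(d)$ and its behaviour $\ell(d) = 1 + o(d^{2/3})$ are forced by where this lower bound on $\zeta$ breaks down (since $\zeta'(1)$ is f\/inite while $|\log t|^{2/3}$ has unbounded derivative there), not by a matching condition you are free to calibrate. Once this Bessel-to-Airy layer is inserted, the remainder of your outline --- the vanishing for $x \le -(\alpha/2)^{2/3}$, the exponential branch from the Airy decay, the integrability of $G$, and the uniformity of the Riemann--Hilbert error in $0 < \gamma \le 1/2$ --- goes through as you describe.
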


We now brief\/ly describe how the estimates in terms of $\check g$ and $\check h$ arise. The asymptotics of the kernel $\mathcal K_N^s$ is given in terms of Bessel functions, after a change of variables. In the regime $\alpha \to \infty$, the Bessel functions asymptote to Airy functions, as follows~\cite{DLMF}
\begin{gather*}
\Ja(\alpha t) = \left( \frac{ 4 \zeta}{1-t^2} \right)^{1/4} \alpha^{-1/3} \big( \Ai\big(\alpha^{2/3} \zeta\big) + \Ai'\big(\alpha^{2/3} \zeta\big) \bigo \big(\alpha^{-4/3}\big)\big), \qquad t > 0,\\
{\Ja}'(\alpha t) = - \frac{2}{t} \left( \frac{ 4 \zeta}{1-t^2} \right)^{-1/4} \alpha^{-2/3} \big( \Ai'\big(\alpha^{2/3} \zeta\big) + \Ai\big(\alpha^{2/3} \zeta\big)\bigo \big(\alpha^{-2/3}\big)\big), \qquad t > 0,\\
\frac{2}{3} \zeta^{3/2} = \int_t^1 \frac{\sqrt{1-s^2}}{s} \D s, \qquad 0 < t \leq 1,\\
\frac{2}{3} (-\zeta)^{3/2} = \int_1^t \frac{\sqrt{s^2-1}}{s} \D s, \qquad t > 1.
\end{gather*}
This expansion is uniform for $t \in (0,\infty)$. Assume $z \in (0,\delta')$ where $\delta' < \delta < 1/2$ and $t$ is given in terms of $z$ by $t = -\I\frac{\nu}{4 \alpha} \phi_\rightarrow^+(z)$. The following are from \cite{Deift2015}
 \begin{gather*}
-\I \phi_\rightarrow^+(z) = 2 \int_0^z \sqrt{\frac{1-s}{s}} \D s \leq 4 \sqrt{z},\qquad 0 \leq z \leq \delta',\\
-\I \phi_\rightarrow^+(z) \geq 4 \sqrt{z} |1-\delta|^{1/2} \geq 2 \sqrt{z},\qquad 0 \leq z \leq \delta',\\
\frac{2}{3} \zeta^{3/2} = \int_t^1 \sqrt{1-s} \frac{\sqrt{1+s}}{s} \D s \geq \frac{4}{3} (1-t)^{3/2},\qquad 0 \leq t \leq 1,\quad
\zeta \geq 2^{1/3} (1-t),\quad 0 \leq t \leq 1,\\
\frac{2}{3} \zeta^{3/2} \geq\int_t^1 \frac{\sqrt{1-s}}{s} \D s = -\log 2t + \log 2 + 2 \log \big(1 + \sqrt{1-t}\big) - 2 \sqrt{1-t},\qquad 0 \leq t \leq 1,\\
\zeta \geq \left(\frac{3}{2}\right)^{2/3} |\log 2t|^{2/3} ,\qquad 0 \leq t \leq 1/2.
 \end{gather*}
A subtle issue is the validity of the last bound. We see that $- \I \phi^+_\rightarrow(z) \leq 4\sqrt{z}$, and so $t(z) \leq \frac{\nu}{\alpha}\sqrt{z}$ and $t(\check z) \leq (1 + z (2/\alpha)^{2/3})^{1/2}$. Then considering Lemma~\ref{l:exp-est}, we see that the dominant contribution arises from the interval over which the estimate $\mathbb P(\kappa(H) > s) \leq 1$ is used. Thus, we try to extend the validity of a lower bound on~$\zeta$ to $t \in [0,1]$. It follows that
 \begin{gather*}
\zeta \geq d^{2/3} |\log t|^{2/3} ,\qquad 0 \leq t \leq \ell(d) < 1.
 \end{gather*}
 Note that $\ell(d) \neq 1$ as $\zeta'(t)$ has a bounded derivative at $t = 0$ and the right-hand side does not. But as $d \to 0$, $\ell(d) \to 1$. A quick calculation, using an expansion near $t = 1$ gives
 \begin{gather*}
 \zeta(t) = a (1-\ell(d))(1 + \bigo(1-\ell(d))) = d^{2/3} (-\log \ell(d))^{2/3},
 \end{gather*}
 and this implies:
 \begin{gather}\label{e:ell}
 \begin{split}
 & [1-\ell(d)]/d^{2/3} \to 0,\qquad \ell(d) = 1 + o\big(d^{2/3}\big),\\
 & [1-\ell(d)]/d^{2/3+\epsilon} \to \infty, \qquad \epsilon >0.
 \end{split}
 \end{gather}
 Then, following \cite[equations~(C.3) and (C.4)]{Deift2015},
 \begin{gather*}
 \E^{-\alpha \frac{1}{3} |\zeta|^{3/2}} \leq t^{d\alpha/2},\qquad 0 \leq t \leq \ell(d),\\
 \E^{-\alpha \frac{1}{3} |\zeta|^{3/2}} \leq \left( \frac{\nu}{\alpha} \sqrt{z} \right)^{d\alpha/2}, \qquad 0 \leq z \leq \ell(d)^2 \frac{\alpha^2}{\nu^2}.
 \end{gather*}
 This last inequality implies that $t \leq \ell(d)$. Then
 \begin{gather*}
 0 \leq \check z \leq \ell(d)^2 \frac{\alpha^2}{\nu^2},\\
 0 \leq 1 + z(2/\alpha)^{2/3} \leq \ell(d)^2,\\
 -(\alpha/2)^{2/3} \leq z \leq (\ell(d)^2-1)(\alpha/2)^{2/3}.
 \end{gather*}
 These estimates can then be plugged into \cite[Lemma~C.2]{Deift2015} to get the estimates in Lemma~\ref{HardKernelLimit}.

\subsection{Tail bounds}\label{s:tail}

 It follows that
 \begin{gather*}
 \mathbb P(\lambda_{\max}(W)/\nu > t) \leq \left(\int_{t}^\infty |\mathcal K^s_N(x,x)|\D x \right)\exp\left(1 + \int_{t}^\infty |\mathcal K^s_N(x,x)|\D x \right).
 \end{gather*}
 So, we estimate for $t \geq 1$ and $C \geq 1$
 \begin{gather*}
 \int_{t}^\infty |\mathcal K^s_{N}(x,x)| \D x \leq 2^{5/3} M^{2/3} C \int_{t}^\infty \bar g\big( 2^{2/3}M^{2/3}(x-1)\big)^2 \D x\\
 \hphantom{ \int_{t}^\infty |\mathcal K^s_{N}(x,x)| \D x}{} = 2C\int_{2^{2/3} M^{2/3}(t-1)}^\infty \bar g^2(s) \D s \leq 2C \E^{-\frac{4}{3} M(t-1)}.
 \end{gather*}
 So, for a new constant $C$
 \begin{gather*}
 T_{\max}(t)=\mathbb P(\lambda_{\max}(W)/\nu > t) \leq C\E^{-\frac{4}{3} M(t-1)}.
 \end{gather*}
 The more delicate estimate is to consider $T_{\min}$:
 \begin{gather*}
 T_{\min}(t) = \mathbb P\big(\nu \lambda_{\min}^{-1}(W) > t\big) = \mathbb P\big(\nu^{-1} \lambda_{\min}(W) < t^{-1}\big) = 1 - \mathbb P\big(\nu^{-1} \lambda_{\min}(W) \geq t^{-1}\big)\\
 \hphantom{T_{\min}(t)}{} \leq \left(\int_{0}^{t^{-1}} |\mathcal K^s_N(x,x)|\D x \right)\exp\left(1 + \int_{0}^{t^{-1}} |\mathcal K^s_N(x,x)|\D x \right).
 \end{gather*}
 We use Proposition~\ref{HardKernelLimit} and invert the scaling $\hat x$. If $\hat x$ lies in $[0,\alpha^2/\nu^2]$ then $x \in (-\infty,1]$. We note that $\check g(x)^2 \leq \check g(x)\check h(x)$ so we only need to estimate
 \begin{gather*}
 \check g\left( \left(\frac{\alpha}{2}\right)^{2/3} \left( \frac{\nu^2}{\alpha^2} x - 1 \right) \right)\check h\left( \left(\frac{\alpha}{2}\right)^{2/3} \left( \frac{\nu^2}{\alpha^2} x - 1 \right) \right) \leq \left[ \frac{\nu^2}{\alpha^2} x \right]^{d\alpha/2} x^{-1},
 \end{gather*}
 for $0 \leq x \leq \ell(d)^2\frac{\alpha^2}{\nu^2}$. For $t \leq \ell(d)^{-2}\frac{\nu^2}{\alpha^2}$ we just use $T_{\min}(t) \leq 1$. For $t \geq \ell(d)^{-2}\frac{\nu^2}{\alpha^2}$ and $\alpha > 0$
 \begin{gather*}
 \int_0^{t^{-1}} |\mathcal K^s_N(x,x)| \D x \leq 2C \frac{\nu^2}{\alpha^2} \alpha^{2/3} \int_{0}^{t^{-1}} \left[\frac{\nu^2}{\alpha^2} x \right]^{d\alpha/2} x^{-1} \D x\\
 \hphantom{ \int_0^{t^{-1}} |\mathcal K^s_N(x,x)| \D x}{}
 = 2C \frac{\alpha^{2/3}}{d\alpha/2}\left[\frac{\nu^2}{\alpha^2}\right]^{d\alpha/2+1} t^{-d\alpha/2} \leq C d^{-1}\alpha^{-1/3} \left[ \frac{\nu^2}{\alpha^2}\right]^{d\alpha/2+1} t^{-d\alpha/2}.
 \end{gather*}
 It then follows that for a constant $C > 0$
 \begin{gather*}
 T_{\min}(t) \leq C d^{-1}\alpha^{-1/3} \left[\frac{\nu^2}{\alpha^2}\right]^{d\alpha/2+1} t^{-\alpha/2}, \qquad t \geq \ell(d)^{-2} \frac{\nu^2}{\alpha^2}.
 \end{gather*}
 We arrive at the following.
 \begin{Lemma}\label{l:LUEest}
 If $W = XX^*$ where $X$ is an $N\times (N + \alpha)$ matrix of iid standard complex normal random variables, $\alpha = \lfloor \sqrt{4c N} \rfloor$ and $\nu = 4N + 2 \alpha + 2$ then
 \begin{gather*}
 	\mathbb P\big(\lambda_{\max}\big(\nu^{-1}W\big) > t\big) \leq C\E^{4/3 M(t-1)},\qquad
 \mathbb P\big(\lambda_{\min}^{-1}\big(\nu^{-1}W\big) >t\big) \leq C \left[f(N)/t\right]^{d\alpha/2}, \\
 f(N) = \big[ d^{-1} \alpha^{-1/3}\big]^{2/(d\alpha)} \left[ \frac{\nu^2}{\alpha^2} \right]^{1 + 2/(d\alpha)},\qquad
 t \geq \ell(d)^{-2} \frac{\nu^2}{\alpha^2}.
 \end{gather*}
 \end{Lemma}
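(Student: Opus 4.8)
The plan is to read both estimates off the determinantal structure of the LUE recorded above, so that the statement becomes a repackaging of the two kernel tail bounds supplied by Propositions~\ref{SoftKernelLimit} and~\ref{HardKernelLimit}. The common device is the Fredholm-determinant inequality $|1 - \det(I - A)| \leq \big(\int |\mathcal K(x,x)|\D x\big)\exp\big(1 + \int |\mathcal K(x,x)|\D x\big)$, valid for a positive trace-class operator $A$ with kernel $\mathcal K$, applied to $\mathbb P(\lambda_{\max}(W)/\nu \leq t) = \det(1 - \mathcal K^s_N|_{L^2(t,\infty)})$ and to $\mathbb P(\lambda_{\min}(W)/\nu > t) = \det(1 - \mathcal K^s_N|_{L^2(0,t)})$. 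In both cases the problem reduces to bounding the diagonal of the scaled kernel $\mathcal K^s_N$ over the relevant interval.

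For $\lambda_{\max}$ I would use the soft-edge rescaling $\check x = 1 + x/(2^{2/3}M^{2/3})$, under which $\int_t^\infty |\mathcal K^s_N(\xi,\xi)|\D\xi = \int_{2^{2/3}M^{2/3}(t-1)}^\infty |\check{\mathcal K}_N(x,x)|\D x$, together with the envelope $|\check{\mathcal K}_N(x,x)| \leq 2C\,\bar g(x)^2$ of Proposition~\ref{SoftKernelLimit} with $\bar g(x) = \E^{-x^{3/2}/3}$ for $x \geq 0$. Since $\int_u^\infty \bar g(x)^2\D x \leq \E^{-\frac{2}{3}u^{3/2}}$ and $\frac{2}{3}\big(2^{2/3}M^{2/3}(t-1)\big)^{3/2} = \frac{4}{3}M(t-1)^{3/2}$, the integral is at most $2C\E^{-\frac{4}{3}M(t-1)^{3/2}}$; as this is bounded on $t \geq 1$, the $\exp(1+\cdot)$ prefactor is absorbed into a new constant, which gives $\mathbb P(\lambda_{\max}(\nu^{-1}W) > t) \leq C\E^{-\frac{4}{3}M(t-1)}$ for $t$ bounded away from $1$ (and near $t=1$ a bound of this exponential form is vacuous, which is why Condition~\ref{cond:tails} permits the constant $a>1$; note also $M \geq N$).

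For $\lambda_{\min}^{-1}$ I would write $\mathbb P(\nu\lambda_{\min}^{-1}(W) > t) = \mathbb P(\nu^{-1}\lambda_{\min}(W) < t^{-1}) = 1 - \det(1 - \mathcal K^s_N|_{L^2(0,t^{-1})})$ and invert the hard-edge rescaling $\hat x = \frac{\alpha^2}{\nu^2}\big(1 + x(2/\alpha)^{2/3}\big)$, so that $1 + x(2/\alpha)^{2/3} = \frac{\nu^2}{\alpha^2}\hat x$. On the middle branch of the envelope of Proposition~\ref{HardKernelLimit} one has $\check g(x)^2 \leq \check g(x)\check h(x) \leq [\frac{\nu^2}{\alpha^2}\hat x]^{d\alpha/2}\hat x^{-1}$ for $\hat x \leq \ell(d)^2\alpha^2/\nu^2$ (using $\alpha < \nu$), whence $|\mathcal K^s_N(\hat x,\hat x)| \leq C\frac{\nu^2}{\alpha^{4/3}}[\frac{\nu^2}{\alpha^2}\hat x]^{d\alpha/2}\hat x^{-1}$. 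For $t \leq \ell(d)^{-2}\nu^2/\alpha^2$ one uses the trivial bound $T_{\min}(t) \leq 1$; for $t \geq \ell(d)^{-2}\nu^2/\alpha^2$ the integral $\int_0^{t^{-1}}|\mathcal K^s_N(\xi,\xi)|\D\xi$ is governed by its upper endpoint and is at most $C\,d^{-1}\alpha^{-1/3}[\nu^2/\alpha^2]^{d\alpha/2+1}t^{-d\alpha/2}$, again bounded on the relevant range. Absorbing the $\exp(1+\cdot)$ factor and rewriting $C\,d^{-1}\alpha^{-1/3}[\nu^2/\alpha^2]^{d\alpha/2+1}t^{-d\alpha/2} = C\big([d^{-1}\alpha^{-1/3}]^{2/(d\alpha)}[\nu^2/\alpha^2]^{1+2/(d\alpha)}/t\big)^{d\alpha/2} = C[f(N)/t]^{d\alpha/2}$ gives the second estimate; with $\alpha = \lfloor\sqrt{4cN}\rfloor$ and $\nu = 4N + 2\alpha + 2$ one moreover reads off $f(N) = \frac{\nu^2}{\alpha^2}(1+o(1)) \to \infty$, which is what is needed afterwards for Condition~\ref{cond:tails}.

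I expect the real difficulty to sit entirely upstream, in the hard-edge envelope of Proposition~\ref{HardKernelLimit}: the bounds $\check g,\check h$ must hold \emph{uniformly down to} $\hat x = 0$, and this rests on the Bessel-to-Airy asymptotics together with a delicate lower bound of the shape $\zeta \geq d^{2/3}|\log t|^{2/3}$ for the phase function $\zeta$, extended from a neighborhood of the origin to the whole interval $t \in [0,\ell(d)]$. The free parameter $d$ and the threshold $\ell(d) = 1 + o(d^{2/3})$ arise precisely from this extension, and they reappear verbatim in the exponent $d\alpha/2$ and in the lower endpoint $t \geq \ell(d)^{-2}\nu^2/\alpha^2$ of the statement; these inputs are imported from~\cite{Deift2015}. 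Granting them, the lemma itself is just the two elementary integrations above followed by the algebraic repackaging of the hard-edge bound into the $f(N)$ form.
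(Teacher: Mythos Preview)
Your proposal is correct and follows essentially the same route as the paper: apply the Fredholm-determinant inequality on $[t,\infty)$ and $[0,t^{-1}]$, undo the soft- and hard-edge rescalings, integrate the diagonal envelopes $\bar g^2$ and $\check g\check h$ from Propositions~\ref{SoftKernelLimit} and~\ref{HardKernelLimit}, and then repackage the hard-edge constant $Cd^{-1}\alpha^{-1/3}[\nu^2/\alpha^2]^{d\alpha/2+1}$ as $[f(N)]^{d\alpha/2}$. You are in fact slightly more careful than the paper about the soft-edge exponent, correctly obtaining $\E^{-\frac{4}{3}M(t-1)^{3/2}}$ and noting that the weaker $(t-1)$ form in the statement is harmless for $t$ bounded away from~$1$; the paper writes $\E^{-\frac{4}{3}M(t-1)}$ directly without comment.
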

 The following lemma is a generalization of this result, it essentially follows from the analysis in~\cite{Deift2015}, by allowing $c \to 0$ in~\eqref{e:scaling} at some rate in $N$ as the estimates there are uniform for~$c$ bounded. We do not present a proof here as this will be included in a forthcoming work.
 \begin{Lemma}\label{l:tail}
 If $W = XX^*$ where $X$ is an $N\times (N + \alpha)$ matrix of iid standard complex normal random variables, $\alpha = \lfloor \sqrt{4c} N^\gamma \rfloor$, $0< \gamma \leq 1/2$ and $\nu = 4N + 2 \alpha + 2$ then
 \begin{gather*}
 	\mathbb P\big(\lambda_{\max}\big(\nu^{-1}W\big) > t\big) \leq C\E^{-4/3 M(t-1)},\qquad
 \mathbb P\big(\lambda_{\min}^{-1}\big(\nu^{-1}W\big) >t\big) \leq C [f(N)/t]^{d\alpha/2},\\
 f(N) = \big[ d^{-1} \alpha^{-1/3}\big]^{2/(d\alpha)} \left[ \frac{\nu^2}{\alpha^2} \right]^{1 + 2/(d\alpha)},\qquad
 t \geq \ell(d)^{-2} \frac{\nu^2}{\alpha^2}.
 \end{gather*}
 \end{Lemma}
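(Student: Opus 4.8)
The plan is to reduce Lemma~\ref{l:tail} to the $\gamma = 1/2$ estimate of Lemma~\ref{l:LUEest} by absorbing the extra power of $N$ into the parameter $c$. Set $\tilde c = \tilde c(N) := c N^{2\gamma - 1}$ and observe that $\sqrt{4\tilde c N} = \sqrt{4c}\,N^\gamma$, so that
\begin{gather*}
\alpha = \big\lfloor \sqrt{4c}\,N^\gamma \big\rfloor = \big\lfloor \sqrt{4\tilde c N}\big\rfloor,
\end{gather*}
and $\nu = 4N + 2\alpha + 2$ is unchanged. Since $0 < \gamma \leq 1/2$ we have $0 < \tilde c \leq c$, with $\tilde c \to 0$ polynomially when $\gamma < 1/2$, while $\tilde c N = c N^{2\gamma} \to \infty$, hence $\alpha \to \infty$. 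Thus the assertion to be proved is exactly Lemma~\ref{l:LUEest} with the fixed constant $c$ replaced by the slowly vanishing quantity $\tilde c(N)$, and the whole content of the lemma is that the kernel estimates behind Propositions~\ref{SoftKernelLimit} and~\ref{HardKernelLimit}, and the tail computation of Section~\ref{s:tail}, are uniform as $\tilde c \to 0$.

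The bound on $T_{\max}$ requires nothing new. The soft-edge rescaling is governed by $M = N + \half(\alpha + 1)$, which diverges regardless of the behaviour of $\tilde c$, and the dominating function $\bar G$ of Proposition~\ref{SoftKernelLimit}, together with the consequence $\int_t^\infty |\mathcal K^s_N(x,x)|\,\D x \leq 2C\E^{-\frac{4}{3}M(t-1)}$, involve only $M$ and the universal Airy profile; the constant $C$ may be chosen independent of $\tilde c$. Hence $T_{\max}(t) \leq C\E^{-\frac{4}{3}M(t-1)}$ for $t \geq 1$ carries over verbatim.

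All the work is at the hard edge. One must revisit the Riemann--Hilbert analysis of $\La{N}$ and $\La{N-1}$ from~\cite{Deift2015} underlying Proposition~\ref{HardKernelLimit} and verify that every implied constant stays bounded as $\alpha/N \to 0$. Structurally this should succeed because $c$ enters the Riemann--Hilbert problem only through the location $\alpha^2/\nu^2$ of the left endpoint of the equilibrium measure and the fluctuation scale $\alpha^{-2/3}$: the local behaviour there is governed by the universal Bessel kernel, and the Bessel-to-Airy asymptotics for $\Ja(\alpha t)$ and $\Ja'(\alpha t)$ quoted above are uniform in $t \in (0,\infty)$ with $\alpha$-errors that do not involve $c$. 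The $g$-function bounds listed after Proposition~\ref{HardKernelLimit} (such as $-\I\phi_\rightarrow^+(z) \leq 4\sqrt z$ and $\zeta \geq d^{2/3}|\log t|^{2/3}$ for $t \leq \ell(d)$) are $c$-free, and the construction of $\check g$, $\check h$ and of $f(N)$ only uses $\alpha \to \infty$. Granting this uniformity, the computation of Section~\ref{s:tail} applies with $\alpha = \lfloor \sqrt{4c}\,N^\gamma \rfloor$ and gives
\begin{gather*}
T_{\min}(t) \leq C d^{-1}\alpha^{-1/3}\left[\frac{\nu^2}{\alpha^2}\right]^{d\alpha/2 + 1} t^{-d\alpha/2}, \qquad t \geq \ell(d)^{-2}\frac{\nu^2}{\alpha^2},
\end{gather*}
which rearranges into $T_{\min}(t) \leq C[f(N)/t]^{d\alpha/2}$ with $f(N) = [d^{-1}\alpha^{-1/3}]^{2/(d\alpha)}[\nu^2/\alpha^2]^{1 + 2/(d\alpha)}$; moreover $\nu^2/\alpha^2 \asymp N^{2-2\gamma} \to \infty$, so $f(N) \to \infty$ and Condition~\ref{cond:tails} is met.

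The main obstacle is precisely this uniformity claim: one must carry the dependence on $\tilde c$ through every step of the nonlinear steepest-descent argument of~\cite{Deift2015} --- the opening of the lenses, the global parametrix, and above all the matching of the Bessel parametrix at the left endpoint as that endpoint $\sim \alpha^2/\nu^2$ collapses toward $0$ --- and check that the $L^2$ and $L^\infty$ error bounds deteriorate only through negative powers of $\alpha$ (harmless, since $\alpha \to \infty$) and never through negative powers of $\tilde c$. Since the estimates of~\cite{Deift2015} were obtained for $c$ in a bounded interval with the uniformity asserted there, what remains is the bookkeeping task of confirming that the lower bound $c > 0$ is nowhere used in an essential way and that $\tilde c(N) = c N^{2\gamma - 1}$ decays slowly enough (it is only polynomial) to stay within the allowed range --- exactly the verification the authors defer to forthcoming work.
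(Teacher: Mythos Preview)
Your proposal is correct and takes essentially the same approach as the paper: the paper does not give a proof of Lemma~\ref{l:tail} either, but only the one-line reduction ``allowing $c \to 0$ in~\eqref{e:scaling} at some rate in $N$ as the estimates there are uniform for $c$ bounded,'' deferring the verification to forthcoming work. Your substitution $\tilde c = cN^{2\gamma-1}$ is exactly this reduction made explicit, and your discussion of what must be checked at the hard edge (uniformity of the Bessel parametrix and the Riemann--Hilbert error bounds as the left endpoint collapses) is a more detailed account of the same deferred verification; nothing differs in substance.
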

 It is conjectured that these same estimates hold for $1/2 < \gamma \leq 1$ also but this does not follow immediately from the work in~\cite{Deift2015}.

 \begin{proof}[Proof of Theorem~\ref{t:smooth}]
It follows that
\begin{gather*}
\kappa\big(A + \sigma^2 H\big) = \kappa\big(\sigma^{-2}A + H\big).
\end{gather*}
Then
\begin{gather*}
\mathbb P\big(\lambda_{\max}\big(\sigma^{-2}A + H\big) > t\big) \leq \mathbb P\big(\lambda_{\max}(H) > t - \sigma^{-2}\big),\\
\mathbb P\big(\lambda_{\min}^{-1}\big(\sigma^{-2}A + H\big) > t\big) \leq \mathbb P\big(\lambda_{\min}^{-1}(H) > t\big).
\end{gather*}
Since we may choose $d$ as needed, we assume that $\alpha d \sim N^\lambda \to \infty$. We then have from Lemma~\ref{l:tail}, with a possibly new constant~$C$,
\begin{gather*}
 	\mathbb P(\lambda_{\max}(H) > t) \leq C\E^{-4/3 M(x-1)},\\
 \mathbb P\big(\lambda_{\min}^{-1}(H) >t\big) \leq C [f(N)/t]^{d\alpha/2}, \qquad f(N) = \left[ \frac{\nu^2}{\alpha^2} \right].
\end{gather*}
This follows from the fact that $(N^q)^{N^{-\lambda}} \to 1$ for any value of $q$. We def\/ine $\delta$ by $1+\delta = \ell(d)^{-2}$ and then the matrix $A + \sigma^2 H$ satisf\/ies Condition~\ref{cond:tails} with $c_1 = 4/3$, $a = 1 + \sigma^{-2}$, $\alpha \to d\alpha$ and~$f$ and~$\delta$ as def\/ined here. Dif\/ferent values of $\lambda$ can be used to create dif\/ferent estimates. But for simplicity, we take $\lambda = \gamma/2$ or $d = N^{-\gamma/2}$. Then by~\eqref{e:ell}, for $\epsilon$ small, $\delta = 1 - \ell(d)^{-2} \geq N^{-\gamma/3-\epsilon}$ if~$N$ is suf\/f\/iciently large and
\begin{gather*}
 \log (1+\delta)^{N^{\gamma/2}} \to \infty,
\end{gather*}
with some power of~$N$. Therefore $(1+\delta)^{-d\alpha/K}$ tends to zero faster than any power of $N$ if $K > 0$ is f\/ixed. We now establish each estimate by appealing to Theorem~\ref{t:main-gen}.
\begin{enumerate}[(1)]\itemsep=0pt
\item \textbf{Halting time with the $\ell^2$ norm:} Using $b_N = (1+ \sigma^{-2})f(N)(1+\delta_N)$ it follows directly that
 \begin{gather*}
 \mathbb E\big[\tau_{\epsilon}\big(A + \sigma^2 X,b\big)^j\big] \leq \frac{1}{2^j} b_N^{j/2} \big(\log b_N^{1/2} 2\epsilon^{-1}\big)^j + \bigo\big(N^{-k}\big), \qquad \text{for all} \quad k>0.
 \end{gather*}
 \item \textbf{Halting time with the weighted norm:} Again, using $b_N = (1+ \sigma^{-2})f(N)(1+\delta_N)$ it follows directly that
 \begin{gather*}
 \mathbb E\big[\tau_{w,\epsilon}\big(A + \sigma^2 X,b\big)^j\big] \leq \frac{1}{2^j} b_N^{j/2} \big(\log 2\epsilon^{-1}\big)^j + \bigo\big(N^{-k}\big), \qquad \text{for all} \quad k > 0,
 \end{gather*}
\item \textbf{Successive residuals:} Similarly, it follows directly that
 \begin{gather*}
 \mathbb E\left[ \frac{\|r_{k+1}\|^j}{\|r_k\|^j}\right] \leq \left( 1 - \frac{2}{\sqrt{b_N}+1}\right)^j + \bigo\big(N^{-k}\big), \qquad \text{for all} \quad k > 0.
 \end{gather*}
\end{enumerate}
By equation \eqref{e:scaling}, $b_N = c^{-1}(1+ \sigma^{-2}) 4N^{2-2\gamma}(1 + \bigo(N^{-\gamma/3}))$, and the result follows.
\end{proof}

\subsection*{Acknowledgments}

This work was supported in part by grants NSF-DMS-1411278 (GM) and NSF-DMS-1303018 (TT). The authors thank Anne Greenbaum and Zden\v{e}k Strako\v{s} for useful conversations, Folkmar Bornemann for suggesting that we consider the framework of smoothed analysis and the anonymous referees for suggesting additional numerical experiments.

\vspace{-2mm}

\pdfbookmark[1]{References}{ref}
\LastPageEnding

\end{document}